\newtheorem{theorem}[subsection]{Theorem}
\newtheorem{proposition}[subsection]{Proposition}
\newtheorem{corollary}[subsection]{Corollary}
\newtheorem*{claim}{Claim}
\theoremstyle{definition}
\newtheorem{definition}[subsection]{Definition}
\theoremstyle{remark}
\newtheorem*{remark}{Remark}
\newcommand{\set}[1]{\left\{ {#1} \right\}}
\newcommand{\hc}{\mathfrak{hc}}
\newcommand{\st}{\mathfrak{st}}
\newcommand{\ma}{\mathsf{MA}}
\newcommand{\continuum}{\mathfrak{c}}
\newcommand{\omegaone}{{\omega_1}}
\newcommand{\pred}{\operatorname{\mathrm{pred}}}
\title[Hadwiger conjecture and Characterizations of Trees Using Graphs]{The Uncountable Hadwiger Conjecture and Characterizations of Trees Using Graphs}
\author{Dávid Uhrik}
\address{Charles University, Faculty of Mathematics and Physics, Department of Algebra, Sokolovská 83, 186 75 Prague 8, Czech Republic}
\email{uhrik@karlin.mff.cuni.cz}
\address{Institute of Mathematics of the Czech Academy of Sciences, Žitná 25, 115 67 Prague 1, Czech Republic}
\subjclass[2020]{03E17, 03E35, 05C05, 05C63}
\keywords{uncountable Hadwiger conjecture, special tree, Suslin tree, uncountable graphs}
\thanks{This work has been supported by Charles University Research Center program No.UNCE/SCI/022.}
\begin{document}
\maketitle

\begin{abstract}
We prove that the existence of a non-special tree of size $\lambda$ is equivalent to the existence of an uncountably chromatic graph with no $K_\omegaone$ minor of size $\lambda$, establishing a connection between the special tree number and the uncountable Hadwiger conjecture. Also characterizations of Aronszajn, Kurepa and Suslin trees using graphs are deduced. A new generalized notion of connectedness for graphs is introduced using which we are able to characterize weakly compact cardinals.
\end{abstract}

\section{Introduction}
The Hadwiger conjecture is a deep unsolved problem in finite graph theory with far-reaching consequences. It states that if $G$ is a simple finite graph and the chromatic number of $G$ is $t$, then the complete graph on $t$ vertices is a minor of $G$. We are interested in generalizations of this conjecture to uncountable graphs.

Recently there has been some interest in the conjecture for infinite graphs. In \cite{zypen2013} van der Zypen proved that there is a countable connected graph whose chromatic number is $\omega$, but $K_\omega$ is not a minor of this graph, i.e.\ the straightforward generalization of the Hadwiger conjecture to infinite graphs fails. The proof can be generalized to limit cardinals. Later Komjáth \cite{komjath2017} showed that the Hadwiger conjecture fails for every infinite cardinal $\kappa$, he proved that:

\begin{theorem}[Komjáth \cite{komjath2017}] \label{hc_tree}
If $\kappa$ is an infinite cardinal, then there is a graph of cardinality $2^\kappa$, chromatic number $\kappa^+$, with no $K_{\kappa^+}$ minor.
\end{theorem}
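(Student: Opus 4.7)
My plan is to give an explicit construction of the graph. I would take vertex set $V={}^\kappa 2$, of cardinality $2^\kappa$, viewed as the set of $\kappa$-branches of the full binary tree ${}^{<\kappa}2$. For each $s\in{}^{<\kappa}2$, place a carefully chosen bipartite subgraph between the cones $B_{s^\frown 0}=\{f\in V : s^\frown 0\subseteq f\}$ and $B_{s^\frown 1}$, ensuring that every edge of $G$ is uniquely \emph{localized} at its splitting node $s\in{}^{<\kappa}2$ (the longest common initial segment of its two endpoints). This partitions the edge set of $G$ into pieces indexed by nodes of ${}^{<\kappa}2$, and the entire graph is controlled by this tree decomposition.

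To establish $\chi(G)\geq\kappa^+$, suppose toward contradiction that $c\colon V\to\kappa$ is a coloring. I would recursively descend the tree, at each step $\alpha<\kappa$ selecting a cone $B_{s_\alpha}$ in which each color still has cardinality $2^\kappa$ (possible since $\kappa<2^\kappa$ by Cantor's theorem). At some splitting node $s_\alpha$, both children cones would then contain $2^\kappa$ vertices of a common color, and the density of the bipartite gadget between them would produce a monochromatic edge, contradicting that $c$ is a coloring.

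To establish that $G$ has no $K_{\kappa^+}$ minor, suppose $\{C_\xi : \xi<\kappa^+\}$ is a family of pairwise disjoint connected subgraphs pairwise joined by an edge. Each joining edge is localized at a unique node of ${}^{<\kappa}2$, and levels of the tree are ordinals $<\kappa$. Assigning to each $C_\xi$ its set of ``support splitting nodes,'' a $\Delta$-system / counting argument applied to $\kappa^+$ many such supports — sets sitting inside ${}^{<\kappa}2$ — would identify two components whose supports diverge into disjoint cones below a common $B_s$, so that no joining edge of $G$ can exist between them, a contradiction.

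The main obstacle is calibrating the bipartite gadgets placed between $B_{s^\frown 0}$ and $B_{s^\frown 1}$: they must be simultaneously rich enough that any $\kappa$-coloring of the $2^\kappa$ branches yields a monochromatic edge, and structurally rigid enough that the tree decomposition genuinely obstructs $K_{\kappa^+}$ minors. This delicate balance — likely achieved via an Erdős–Hajnal-style sparse-yet-rich bipartite graph on each pair of cones — is where the combinatorial content of the proof lies, together with a careful accounting of tree-arithmetic quantities such as $|{}^{<\kappa}2|=2^{<\kappa}$ so that the overall vertex count remains $2^\kappa$.
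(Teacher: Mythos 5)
There is a genuine gap: your proposal defers its entire combinatorial content to an object you never construct. Everything hinges on the ``carefully chosen'' bipartite gadgets between sibling cones $B_{s^\frown 0}$ and $B_{s^\frown 1}$, and the two properties you need from them pull in opposite directions without any argument that they can coexist. Worse, as stated your minor argument is backwards: if two branch sets $C_\xi, C_\eta$ ``diverge into disjoint cones below a common $B_s$,'' they sit inside $B_{s^\frown 0}$ and $B_{s^\frown 1}$ respectively, and those are exactly the pairs of sets that the gadget at $s$ is supposed to join richly (that richness is what your chromatic-number argument needs), so you cannot conclude ``no joining edge exists'' without specifying a sparseness property you never state. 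The coloring argument is also unsupported: you cannot in general keep all $\kappa$ color classes of size $2^\kappa$ along a single descending sequence of cones (different colors may concentrate in different children at different nodes), the recursion exhausts the tree after $\kappa$ steps without any stage at which one color is guaranteed large in \emph{both} children, and in any case largeness only helps if the gadget joins all pairs of large sets. The $\Delta$-system step is likewise not available: the branch sets $C_\xi$ may have size up to $2^\kappa$, so you have $\kappa^+$ supports of unbounded size inside a set of size $2^{<\kappa}$, and no sunflower or counting lemma applies without extra cardinal-arithmetic hypotheses. Finally, the theorem asserts chromatic number exactly $\kappa^+$, and you never address the upper bound $\chi(G)\le\kappa^+$.

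For comparison, the proof this paper relies on (Komj\'ath's) is much more economical and avoids all of these issues: take a non-$\kappa$-special tree $T$ of size $2^\kappa$ with no branches of length $\kappa^+$ (such a tree always exists) and let $G$ be its comparability graph. A proper coloring of $G$ with $\kappa$ colors is precisely a partition of $T$ into $\kappa$ antichains, i.e.\ a $\kappa$-specialization, so $\chi(G)>\kappa$, while the absence of chains of length $\kappa^+$ gives $\chi(G)\le\kappa^+$; and a $K_{\kappa^+}$ minor in a comparability graph of a tree yields a chain of length $\kappa^+$, which $T$ does not have. If you want to salvage your approach, you would have to exhibit the gadgets explicitly and prove both the Ramsey-type property for colorings and the separation property for minors — at which point you are essentially rebuilding a non-special tree structure by hand.
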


Thus the Hadwiger conjecture fails also for uncountable graphs if there is no bound on the size of the witness. If we only consider graphs of size $\omega_1$, the conjecture may hold.

\begin{theorem}[Komjáth \cite{komjath2017}] \label{hc_ma}
If $\mathsf{MA}_{\omega_1}$ holds, then every graph $G$ with $|G| = \chi(G) = \omega_1$ contains a subdivision of $K_{\omega_1}$.
\end{theorem}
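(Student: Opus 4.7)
The plan is to derive the subdivision by applying $\ma_{\omegaone}$ to a ccc forcing $P$ whose conditions are finite approximations to a $K_{\omegaone}$-subdivision inside $G$. The argument has three steps: preprocess $G$ to obtain strong connectivity, define $P$ and its dense sets, and verify the countable chain condition; only the last step is a real difficulty.

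\emph{Preprocessing.} Iteratively delete vertices of countable degree. At each step the deleted set spans a subgraph of maximum degree $\le \omega$ (hence chromatic number $\le \omega$), and countable unions of such subgraphs still have chromatic number $\le \omega$, so $\chi$ remains $\omegaone$ throughout the iteration. The process stabilises at a subgraph of minimum degree $\omegaone$ with $\chi = \omegaone$. A further iterative pruning --- removing vertices that can be separated from most of the graph by countable cuts --- should give a subgraph $G'$ of chromatic number $\omegaone$ that is \emph{$\omegaone$-connected}: removing any countable vertex set leaves a connected graph of chromatic number $\omegaone$. Verifying that enough chromatic mass survives this second pruning is standard but is the first place one has to be careful.

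\emph{The forcing.} A condition of $P$ is a pair $p = (B_p, \Pi_p)$ where $B_p \in [V(G')]^{<\omega}$ and $\Pi_p$ assigns to each $\{u,v\} \in [B_p]^2$ a finite $u$--$v$ path $\pi_p(u,v)$ in $G'$, the paths being pairwise internally disjoint and with interiors disjoint from $B_p$. The order is $q \le p$ iff $B_p \subseteq B_q$ and $\pi_q(u,v) = \pi_p(u,v)$ for every $\{u,v\} \in [B_p]^2$. A filter meeting each dense set $D_v = \{p : v \in B_p\}$ for $v \in V(G')$ yields a $K_{\omegaone}$-subdivision inside $G' \subseteq G$. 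Density of $D_v$ is easy: at stage $p$ only finitely many vertices are committed to, and by $\omegaone$-connectivity I can route a new path from $v$ to each existing branch vertex while avoiding the finite forbidden set.

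\emph{The ccc property --- the main obstacle.} Given $\omegaone$ conditions $\{p_\alpha : \alpha < \omegaone\}$, apply the $\Delta$-system lemma to the \emph{full vertex footprints} $B_{p_\alpha} \cup \bigcup V(\pi_{p_\alpha})$ (rather than just the branch vertices), producing a common root $R$ and pairwise vertex-disjoint tails, and thin further so that all $p_\alpha$ agree on the restriction to $R$ (in particular, their root paths coincide). For any $\alpha \neq \beta$ the combined system $p_\alpha \cup p_\beta$ now has paths that automatically coexist --- interiors agree on $R$ and are disjoint off $R$, and a tail vertex of $p_\alpha$ cannot occur in an interior of a $p_\beta$-path because of how the footprints are $\Delta$-systematised. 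What is missing are paths for the finitely many cross-tail pairs, which $\omegaone$-connectivity supplies one by one, each new path avoiding the growing finite forbidden set; this completes a common refinement of $p_\alpha$ and $p_\beta$. The delicate step is insisting on the full-footprint $\Delta$-system refinement: a mere branch-vertex $\Delta$-system would leave open the possibility of a tail branch vertex of $p_\alpha$ lying inside the interior of some $p_\beta$-path, making amalgamation impossible.
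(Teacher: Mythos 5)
There is a genuine gap, and it sits exactly where you wrote that the verification is ``standard'': the preprocessing. Your plan reduces the theorem to the claim that every graph with $|G|=\chi(G)=\omega_1$ contains an uncountable $\omegaone$-connected subgraph $G'$ of uncountable chromatic number, and this claim is not a preliminary --- it is (essentially all of) the theorem itself. Indeed, by Proposition~\ref{minor_connectedness} of this paper, an $\omegaone$-connected graph already contains a subdivision of $K_{\omegaone}$ outright in ZFC, by a straightforward transfinite path-construction; so once $G'$ exists your forcing is superfluous. Moreover the claim is provably not a ZFC fact, and your pruning argument never invokes $\mathsf{MA}_{\omega_1}$, so it cannot succeed as written: in a model with a Suslin tree $T$, its comparability graph has size and chromatic number $\omegaone$ (antichains are countable) and has no $K_{\omegaone}$ minor, hence no $K_{\omegaone}$ subdivision; and in fact no uncountable induced subgraph of it is $\omegaone$-connected --- given uncountable $S\subseteq T$, pick incomparable $u,v\in S$ and observe that every path from $u$ in the induced graph on $S$ must pass through the countable set $\pred(u)\cap S$, which therefore is a countable cut separating $u$ from $v$. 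So the second pruning simply has no target for this graph, even though the hypotheses of the theorem hold for it in that model. (The first pruning is also carelessly justified --- the deletion may run through $\omegaone$ stages and there are edges between stages, so ``countable unions of such subgraphs'' does not cover the deleted set --- but this is secondary.)

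The structural symptom is that in your outline the chromatic number never interacts with $\mathsf{MA}_{\omega_1}$: $\chi(G)=\omegaone$ is used only to argue that ``chromatic mass survives'' the pruning, while the ccc verification uses only $\omegaone$-connectivity. Any correct proof must make the uncountable chromatic number do work at the point where $\ma_{\omegaone}$ is applied, because the consistent counterexamples (Suslin comparability graphs) are uncountably chromatic but tree-like and very far from highly connected. That is how the machinery of this paper handles the minor version: by Theorem~\ref{special--chromatic}, a counterexample of size $\omegaone$ yields a non-special tree of size $\omegaone$ with no uncountable branch, and $\mathsf{MA}_{\omega_1}$ specializes all such trees; Komjáth's proof of the stronger subdivision statement \ref{hc_ma} likewise has to feed the chromatic hypothesis into the application of $\ma_{\omegaone}$ rather than into a ZFC connectivity reduction. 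As it stands, your argument proves only the (true but much easier) statement that $\omegaone$-connected graphs contain subdivisions of $K_{\omegaone}$, which needs no Martin's Axiom at all.
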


The proof of Theorem \ref{hc_tree} uses a non-$\kappa$-special tree of size $2^\kappa$ (such a tree always exists) to construct a graph with the desired properties. On the other hand, starting with a graph a tree can be obtained, using a specific construction by Brochet and Diestel \cite{brochetdiestel1994}, which reflects many useful properties of the graph. Theorems \ref{hc_tree} and \ref{hc_ma} give rise to a cardinal invariant.

\begin{definition}
The \emph{uncountable Hadwiger conjecture number}, denoted $\hc$, is the least size of an uncountably chromatic graph with no $K_{\omega_1}$ minor.
\end{definition}

In what follows we will show that $\hc$ is equal to the special tree number. For more information on the special tree number and its history see \cite{switzer2022}.

\begin{definition}
The \emph{special tree number}, denoted $\st$, is the least size of a non-special tree with no uncountable branch.
\end{definition}

What Komjáth essentially proved in \cite[Theorem 2]{komjath2017} for the case of $\kappa=\omega$ can be succinctly written as:

\begin{theorem}[Komjáth \cite{komjath2017}] \label{hc<st}
$\hc \le \st$.
\end{theorem}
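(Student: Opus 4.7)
The plan is to show that any tree $T$ which is non-special and has no uncountable branch gives rise to an uncountably chromatic graph $G$ of size $|T|$ with no $K_{\omega_1}$ minor; applied to a witness of $\st$, this yields $\hc \le \st$. The candidate graph is the \emph{comparability graph} of $T$: set $V(G) = T$ and put $\{s,t\} \in E(G)$ iff $s < t$ or $t < s$. Independent sets in $G$ are precisely antichains of $T$, so proper colorings correspond to decompositions of $T$ into antichains; non-specialness then gives $\chi(G) \ge \omega_1$.

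The substantial step is ruling out a $K_{\omega_1}$ minor. Assume for contradiction that $\{S_\alpha : \alpha < \omega_1\}$ are pairwise disjoint connected subsets of $G$ with an edge of $G$ between each pair. The key structural lemma I would establish is that every nonempty connected $S \subseteq V(G)$ has a tree-minimum $m(S) \in S$, i.e.\ an element with $m(S) \le s$ for all $s \in S$. For finite $S$ one inducts on $|S|$: pick a non-cut-vertex $u$ of $G[S]$ and let $m'$ be the inductive minimum of $S \setminus \{u\}$. Taking any neighbour $w$ of $u$ in $S$, if $w \le u$ then $m' \le w \le u$ already witnesses $m'$ as the minimum of $S$, while if $u \le w$ then $u$ and $m'$ both lie $\le w$, so they are comparable by the tree property (elements below a common element form a chain), and $\min(u, m')$ is the minimum. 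For infinite $S$, well-foundedness of $T$ supplies a minimal element $m \in S$; for any $s \in S$ the finite path from $m$ to $s$ inside $S$ has a minimum $m^* \le m$ by the finite case, and minimality forces $m^* = m \le s$.

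Given the lemma, set $m_\alpha := m(S_\alpha)$; these are pairwise distinct by disjointness of the $S_\alpha$. For any $\alpha \ne \beta$, an edge of $G$ between $S_\alpha$ and $S_\beta$ supplies comparable elements $s \in S_\alpha$ and $t \in S_\beta$; after relabelling, $s \le t$, so $m_\alpha \le s \le t$ and $m_\beta \le t$. Hence $m_\alpha$ and $m_\beta$ both lie below $t$ in $T$, so they are comparable by the tree property. Thus $\{m_\alpha : \alpha < \omega_1\}$ is an uncountable chain in $T$, contradicting the hypothesis that $T$ has no uncountable branch.

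The main obstacle I expect is the structural lemma: connectedness in the comparability graph is genuinely weaker than being a chain, and one really has to use the tree property to upgrade a minimal element to a minimum of a connected set. Once this is in hand, the $K_{\omega_1}$-minor contradiction is a short comparability manipulation in $T$, and the bound $\hc \le \st$ follows immediately by choosing $T$ to realise $\st$.
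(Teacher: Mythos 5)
Your proof is correct and follows essentially the same route as the paper, which obtains $\hc \le \st$ by taking the comparability graph of a non-special tree with no uncountable branch (citing Komj\'ath's Theorem 2 for the fact that it is uncountably chromatic and has no $K_{\omega_1}$ minor). Your structural lemma that every connected subset of the comparability graph has a tree-minimum, giving a chain of minima from a putative $K_{\omega_1}$ minor, is exactly the missing detail of that cited argument, and your coloring-versus-specializing-function translation is the standard one used there.
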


In this note we prove the other inequality, thus establishing that in fact $\hc = \st$, see Corollary \ref{hc=st}.

Using the techniques in proving the previous theorem we also provide a characterization of Suslin trees in terms of graphs, proving that a Suslin tree exists if and only if there is a graph of size $\omegaone$ with no uncountable independent set and no $K_\omegaone$ minor, see Theorem \ref{suslin_characterization}. This characterization has been independently discovered by P. Komjáth and S. Shelah in their recent paper \cite{komjathshelah2021}.

In a similar fashion we define graph counterparts for Aronszajn and Kurepa trees using a generalized notion of connectedness for graphs. As a corollary we also deduce a new characterization of weakly compact cardinals using graphs.

\subsection*{Notation}
We use standard set theoretic notation. A graph $G$ is a pair $(V,E)$, where $V$ is an arbitrary set and $E \subseteq [V]^2$; the set $V$ is called the vertex set and $E$ the edge set of the graph. Unless otherwise specified $G$ will always denote a graph whose vertex set is $V$ and the edge set is $E$, in case of ambiguity we will use $V_G$ and $E_G$ instead. If $\kappa$ is a cardinal, then $K_\kappa$ denotes the complete graph on $\kappa$ vertices, i.e.\ the graph $(\kappa, [\kappa]^2)$. Two subsets $X,Y \subseteq V$ are connected if there are vertices $x \in X$ and $y \in Y$ such that $\set{x,y} \in E$. A graph is connected if between any two vertices there is a path. A component of a graph is any maximal connected subgraph. A graph is $\kappa$-connected if it stays connected after removing $<\kappa$ many vertices. If $X,Y,S$ are subsets of the vertex set we say that $S$ separates $X$ from $Y$ in $G$ if after removing $S$ from the graph, $X \setminus S$ and $Y \setminus S$ lie in different components.

The chromatic number of a graph $G$, denoted $\chi(G)$, is the least cardinal $\mu$ such that there is a proper coloring of $G$ with $\mu$ colors. A proper coloring is a function $c: V \to \mu$ such that $c(u) \neq c(v)$ whenever $\set{u,v} \in E$.

If $G$ and $H$ are graphs, we say that $G$ is a minor of $H$ if there are pairwise disjoint non-empty subsets $(X_u)_{u \in V_G}$ of $V_H$ such that each induces a connected subgraph in $H$ and for every $\set{u,v} \in E_G$, $X_u$ and $X_v$ are connected in $H$. Note also that being a minor is a transitive property. The graph $H$ is a subdivision of $G$ if $H$ is constructed from $G$ by replacing edges with paths. Note that if $H$ is a subdivision of $G$, then $G$ is a minor of $H$.

A tree is a poset $(T, \le)$ such that each set of the form $\set{s \in T \mid s < t}$ for some $t \in T$ is well-ordered, the order type of this set is called the height of the node $t$, denoted as $\mathrm{ht}(t)$. For a tree the set $T_\alpha$ is the set of nodes of height $\alpha$, sometimes referred to as level $\alpha$ of $T$, for $\alpha$ limit it will be useful to denote $T_{<\alpha}$, the set of nodes of height less than $\alpha$. By $\pred(t)$ we denote the set of nodes in the tree which are below $t$ in the tree order. By a branch in a tree we mean a maximal chain.

A tree is called $\kappa$-special if there is a function $f: T \to \kappa$ which is injective on chains. An $\omega$-special tree will be simply called a special tree.

A $\kappa$-Suslin tree is a tree of size $\kappa$ with no $\kappa$-branches and no antichains of size $\kappa$. A $\kappa$-Aronszajn tree is a tree of height $\kappa$ with no $\kappa$-branch and levels of size less than $\kappa$. A $\kappa$-Kurepa tree is a tree of height $\kappa$ with at least $\kappa^+$ different branches but levels of size less than $\kappa$. When $\kappa = \omegaone$ we omit the cardinal specification.

The comparability graph of a tree $T$ is a graph whose vertex set is the domain of the tree and the edge set is $\set{\set{s,t} \in [T]^2 \mid s \le t \lor t \le s}$.

If $T$ is a tree, a graph is called a $T$-graph if it is isomorphic to a graph whose vertex set is $T$, is a subgraph of the comparability graph of $T$ and for every $t \in T$ the vertices connected to $t$ are cofinal in $\set{s \in T \mid s < t}$, if this set has a maximum we want $t$ to be connected to it.

\subsection*{Acknowledgement.} The author is grateful to Chris Lambie-Hanson for constructive discussions on the topic which greatly improved the exposition of this paper.

\section{Limit cardinals}

First we show that the following result by van der Zypen can be easily generalized to cover every limit cardinal.

\begin{theorem}[van der Zypen \cite{zypen2013}]
There is a countable connected graph whose chromatic number is $\omega$ but $K_\omega$ is not a minor of this graph.
\end{theorem}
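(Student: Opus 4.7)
The plan is to construct $G$ as a bouquet of complete graphs all sharing a single common vertex. Specifically, take a distinguished vertex $*$ together with pairwise disjoint finite sets $V_n$ of size $n$ for $n \ge 1$, set the vertex set of $G$ to be $\{*\} \cup \bigsqcup_{n \ge 1} V_n$, and declare each $\{*\} \cup V_n$ to induce a copy of $K_{n+1}$. Then $G$ is countable, connected (through $*$), and contains arbitrarily large complete subgraphs, so $\chi(G) = \omega$.

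The real work lies in showing that $K_\omega$ is not a minor of $G$. Suppose for contradiction that $(X_m)_{m<\omega}$ is a pairwise disjoint family of connected vertex sets witnessing a $K_\omega$ minor, so that every two of them are joined by an edge of $G$. The vertex $*$ is a cut vertex, so $G \setminus \{*\}$ is the disjoint union of the cliques on the $V_n$'s, with no edges between distinct $V_n$'s. At most one $X_m$ contains $*$, and each of the others, being connected in $G \setminus \{*\}$, lies inside a single $V_n$. Two $X_m$'s sitting in distinct $V_n$'s have no edge between them in $G$, so all but at most one of the $X_m$'s must lie in the same $V_n$ to remain pairwise connected; but each $V_n$ is finite while the family is infinite, a contradiction.

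The main potential obstacle is simply arranging the construction so that this clean cut-vertex argument goes through; a linear ``spine'' variant gluing the $K_{n+1}$'s in a chain would also work but would require more careful bookkeeping on how the $X_m$'s can spread along the spine, since connected $X_m$'s could traverse multiple blocks. For the promised generalization to an arbitrary limit cardinal $\kappa$, the same construction with a set $V_\lambda$ of size $\lambda$ for each $\lambda < \kappa$ does the job: one obtains $\chi(G) = \kappa$ because $\kappa$ is a limit (so arbitrarily large cliques appear), and the cut-vertex argument still forces all but one $X_m$ into a single $V_\lambda$ of size $\lambda < \kappa$, ruling out a $K_\kappa$ minor.
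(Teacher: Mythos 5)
Your construction (a bouquet of increasingly large cliques glued at a single cut vertex) and your cut-vertex argument against a $K_\omega$ minor are exactly the paper's own approach, which it carries out in the generalized form of Proposition \ref{hc_limit} for an arbitrary limit cardinal $\kappa$: there the vertex $0$ plays the role of your $*$ and the cliques $K_{\kappa_\alpha}$ play the role of your $V_n$. Your proof, including the limit-cardinal generalization at the end, is correct and essentially identical to the paper's.
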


\begin{proposition} \label{hc_limit}
Suppose $\kappa$ is a limit cardinal. There exists a connected graph whose size and chromatic number is $\kappa$, but $K_\kappa$ is not a minor of this graph.
\end{proposition}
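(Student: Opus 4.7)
The plan is to imitate van der Zypen's construction at the limit cardinal level: glue together smaller complete graphs via a single apex vertex. Using that $\kappa$ is a limit cardinal, fix a cofinal family $(\lambda_i)_{i \in I}$ of infinite cardinals below $\kappa$. For each $i$ take a set $V_i$ of size $\lambda_i$ spanning a copy of $K_{\lambda_i}$, with the $V_i$ pairwise disjoint. Adjoin a new vertex $v^*$ joined to every vertex in $\bigsqcup_i V_i$, and let $G$ be the resulting graph.

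First I verify the routine properties. The graph $G$ is connected via $v^*$ and has size $|G| = 1 + \sum_i \lambda_i = \kappa$. Since $G$ contains $K_{\lambda_i}$ for $\lambda_i$ cofinal in $\kappa$, one has $\chi(G) \geq \kappa$; conversely $G$ is $\kappa$-colorable by coloring each $V_i$ injectively with $\lambda_i$ colors drawn from $\kappa$ and spending one extra color on $v^*$, so $\chi(G) = \kappa$.

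The main obstacle is showing that $K_\kappa$ is not a minor of $G$. Suppose for contradiction that $(X_\alpha)_{\alpha < \kappa}$ is a family of pairwise disjoint nonempty connected subsets of $V_G$ witnessing a $K_\kappa$-minor. At most one of the $X_\alpha$ contains $v^*$, so after discarding it we may assume all $\kappa$-many remaining $X_\alpha$ avoid $v^*$. In $G \setminus \set{v^*}$ there are no edges between distinct $V_i$, so each connected $X_\alpha$ is contained in a single $V_{i(\alpha)}$. For $\alpha \ne \beta$ the pair $X_\alpha, X_\beta$ must be joined by an edge in $G$ (still avoiding $v^*$), which forces $i(\alpha) = i(\beta)$. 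Hence all $\kappa$-many sets $X_\alpha$ lie inside a single $V_i$ of size $\lambda_i < \kappa$, contradicting their pairwise disjointness. This rules out a $K_\kappa$-minor and finishes the proof.
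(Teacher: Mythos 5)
Your construction (disjoint cliques of cofinally many sizes below $\kappa$ plus a single apex vertex joined to everything) and the verification of connectedness, chromatic number, and the non-existence of a $K_\kappa$ minor are exactly the paper's argument, with only a cosmetic difference in how the case analysis for the apex vertex is phrased. The proof is correct.
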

\begin{proof}
Suppose $(\kappa_\alpha \mid \alpha < \mu)$ is an increasing cofinal sequence of cardinals in $\kappa$, where $\mu$ may be equal to $\kappa$. Let the vertex set of the graph be the union $\{0\} \cup \set{(\alpha, \beta) \mid \alpha \in \mu \land \beta \in \kappa_\alpha}$. The edge set is defined as follows: connect the vertex $0$ to every other vertex and for $\alpha < \mu$ connect $(\alpha, \beta)$ to $(\alpha, \beta')$ for every $\beta < \beta' < \kappa_\alpha$. In simple terms the graph is the disjoint union of complete graphs of increasing size, cofinal in $\kappa$, plus a vertex connected to every other vertex.

The chromatic number of this graph is clearly bounded by $\kappa$ but also larger than $\kappa_\alpha$ for every $\alpha < \mu$ as the complete graph $K_{\kappa_\alpha}$ embeds into the graph. Thus the chromatic number is exactly $\kappa$.

Now assume $\set{C_\gamma \mid \gamma < \kappa}$ is a collection of pairwise disjoint connected subgraphs forming a $K_\kappa$ minor.

Suppose first that $0$ does not belong to any of the sets $C_\gamma$, then necessarily $\bigcup_{\gamma < \kappa} C_\gamma$ is a subset of some complete graph $K_{\kappa_\alpha}$, this is of course a contradiction as $\kappa_\alpha < \kappa$ and $C_\gamma$ contains at least one element.

So assume $0 \in C_{\gamma_0}$ for some $\gamma_0 < \kappa$. Now given any $\gamma \neq \gamma_0$ we must have that $C_\gamma \subseteq K_{\kappa_\alpha}$ for some $\kappa_\alpha$ otherwise they cannot be connected by an edge as $0$ is in $C_{\gamma_0}$ and it is the only vertex connecting the disjoint cliques. Hence we get a contradiction as in the previous case.
\end{proof}

Thus the Hadwiger conjecture fails unconditionally for $\omega$ and in general for every limit cardinal.

\section{On the $\hc$ number}

We prove some basic results about $\hc$ and show its connection to $\st$.

\begin{proposition}
\begin{enumerate}
    \item $\omega_1 \le \hc \le \st \le \continuum$,
    \item \label{ma_hc} $\mathsf{MA}$ implies $\hc = \continuum$,
    \item \label{sh_hc} for any $\kappa > \omega_1$ of uncountable cofinality it is consistent that $\omega_1 = \hc < \continuum = \kappa$.
\end{enumerate}
\end{proposition}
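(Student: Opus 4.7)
The plan is to derive the three items from Theorem \ref{hc<st} together with standard bounds and consistency results for $\st$ collected in \cite{switzer2022}. For (1), the lower bound $\omegaone \le \hc$ is immediate: no graph with fewer than $\omegaone$ vertices can have uncountable chromatic number. The middle bound $\hc \le \st$ is exactly Theorem \ref{hc<st}. The upper bound $\st \le \continuum$ is a ZFC fact; the witness is a non-special tree of size $\continuum$ with no uncountable branch, for instance the tree of countable injective sequences in $\omega$, ordered by end-extension, which has cardinality $\continuum$, admits no uncountable branch by a pigeonhole argument on $\omega$, and is non-special as recorded in \cite{switzer2022}.

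For (2), I would combine the forthcoming equivalence $\hc = \st$ (Corollary \ref{hc=st}) with the known implication $\ma \Rightarrow \st = \continuum$. The latter is the standard specialization argument: given a tree $T$ of size less than $\continuum$ with no uncountable branch, the natural finite-approximation forcing that assigns natural numbers to nodes is ccc, and $\ma$ supplies a filter meeting the $|T|$ many dense sets required to obtain a specializing function on all of $T$. Together with (1), this forces $\hc = \st = \continuum$.

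For (3), I would cite Shelah's consistency result (see \cite{switzer2022}) that for any $\kappa > \omegaone$ of uncountable cofinality, there is a model with $\st = \omegaone$ and $\continuum = \kappa$; the standard strategy is to preserve a Suslin tree from $L$ while iterating a proper forcing that blows up the continuum to $\kappa$. In such a model, Theorem \ref{hc<st} yields $\hc \le \st = \omegaone$, and combined with (1) we obtain $\omegaone = \hc < \continuum = \kappa$.

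The main obstacle is part (2): it genuinely requires the nontrivial direction $\st \le \hc$ of Corollary \ref{hc=st}, whereas (1) and (3) are powered by the easy direction $\hc \le \st$ already in hand. If one wishes to avoid this forward reference in (2), one must directly extend the proof of Theorem \ref{hc_ma} from graphs of size $\omegaone$ under $\ma_{\omegaone}$ to graphs of arbitrary size below $\continuum$ under the full $\ma$, applying $\ma_\kappa$ for each $\omegaone \le \kappa < \continuum$ in place of $\ma_{\omegaone}$ in Komjáth's original argument.
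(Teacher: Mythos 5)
Your proposal is correct, but it routes part (2) (and, superficially, part (3)) differently from the paper. For (2) the paper does exactly what you describe as the fallback: it reruns Komj\'ath's proof of Theorem \ref{hc_ma} with full $\ma$ (i.e.\ $\ma_\kappa$ for the relevant $\kappa < \continuum$) to find a subdivision of $K_\omegaone$ in every uncountably chromatic graph of size $<\continuum$, so it never needs the hard direction $\st \le \hc$. Your primary route instead forward-references Corollary \ref{hc=st} together with the standard Baumgartner specialization argument giving $\ma \Rightarrow \st = \continuum$; this is logically sound (Corollary \ref{hc=st} rests on Theorem \ref{special--chromatic}, which does not depend on this proposition), and it buys you that you never have to check that Komj\'ath's argument scales from $\omegaone$-sized graphs to graphs of arbitrary size below $\continuum$, at the cost of leaning on the paper's main theorem for a fact the paper obtains directly and of inverting the expository order. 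For (3) the underlying witness is the same: the paper takes a model with a Suslin tree and $\continuum = \kappa$ (Kunen) and checks directly that the tree's comparability graph is uncountably chromatic (no uncountable independent set) with no $K_\omegaone$ minor, while you pass through $\st = \omegaone$ and Theorem \ref{hc<st}. One caveat there: your parenthetical mechanism ``iterate a proper forcing that blows up the continuum'' is too loose, since proper (even ccc) forcings can specialize the Suslin tree; one needs a forcing known to preserve Suslin trees, e.g.\ adding $\kappa$ Cohen reals over a model of GCH, which is also where the hypothesis $\mathrm{cf}(\kappa) > \omega$ enters via $\kappa^{\aleph_0} = \kappa$ --- but since you cite the known consistency result, this does not affect correctness. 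Part (1) matches the paper, which simply quotes Theorem \ref{hc<st} and the standard bound $\st \le \continuum$; your explicit witness, the tree of injective countable sequences in $\omega$, is the same non-special tree underlying Theorem \ref{hc_tree} for $\kappa = \omega$.
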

\begin{proof}
Clearly $\hc$ cannot be countable as the graph must be uncountably chromatic. The upper bound comes from Theorem \ref{hc<st}.

Assuming full $\ma$ in the proof of Theorem \ref{hc_ma} we obtain that in any uncountably chromatic graph of size $<\continuum$ a subdivision of $K_\omegaone$ can be found.

For (\ref{sh_hc}) consider the comparability graph of a Suslin tree. This defines a graph of size $\omegaone$ with no uncountable independent set as independent sets in comparability graphs clearly translate to antichains in the tree, hence the graph cannot be countably chromatic. The graph also has no $K_\omegaone$ minor as this would define an uncountable branch, see \cite[Theorem 2]{komjath2017} for a proof. As is well known \cite[Corollary V.4.14]{kunen2011} the continuum can be arbitrarily large while a Suslin tree exists.
\end{proof}

We can say something about the cofinality of $\hc$ as well.

\begin{proposition}
The cofinality of $\hc$ is uncountable.
\end{proposition}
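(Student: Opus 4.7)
The plan is to argue by contradiction. Suppose $\cof(\hc) = \omega$, so that $\hc$ is a limit cardinal expressible as $\hc = \sup_{n<\omega} \kappa_n$ with each $\kappa_n < \hc$. The strategy is to take a witness graph $G$ of size $\hc$ (uncountably chromatic, no $K_{\omegaone}$ minor), decompose it as a countable increasing union of strictly smaller induced subgraphs, conclude each piece is countably chromatic by minimality of $\hc$, and then glue the countable colorings into a single countable coloring of $G$.

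In detail, I would fix an enumeration $V_G = \set{v_\alpha \mid \alpha < \hc}$ and put $V_n = \set{v_\alpha \mid \alpha < \kappa_n}$, so that $V_n \subseteq V_{n+1}$ and $V_G = \bigcup_{n<\omega} V_n$. The induced subgraph $G_n = G[V_n]$ has size at most $\kappa_n < \hc$ and inherits from $G$ the property of having no $K_\omegaone$ minor (minors are preserved under taking subgraphs). By the definition of $\hc$, each $G_n$ must therefore be countably chromatic; pick proper colorings $c_n : V_n \to \omega$.

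The gluing step is the only part requiring care. For each $v \in V_G$ let $n(v)$ be the least $n$ with $v \in V_n$, and set $c(v) = (n(v), c_{n(v)}(v)) \in \omega \times \omega$. If $\set{u,v} \in E_G$ and $n(u) < n(v)$ the first coordinates already separate the colors; if $n(u) = n(v) = n$, then both endpoints lie in $V_n$, so the edge is present in $G_n$ and $c_n(u) \ne c_n(v)$. Hence $c$ is a proper coloring of $G$ in countably many colors, contradicting $\chi(G) \ge \omegaone$.

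I do not expect a real obstacle here: once the countable-cofinality assumption gives us a countable increasing exhaustion by subgraphs of size strictly less than $\hc$, the gluing lemma for countable unions of countably chromatic induced subgraphs is elementary, and the fact that $K_\omegaone$-minor-freeness is inherited by subgraphs is automatic. The only point worth pausing over is that the decomposition must be by induced subgraphs so that every edge of $G$ appears in some $G_n$, which is guaranteed because the $V_n$ are cofinal in $V_G$.
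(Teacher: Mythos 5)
Your proposal is correct and follows essentially the same argument as the paper: split a witness graph into countably many induced subgraphs of size below $\hc$, invoke minimality of $\hc$ (together with the inheritance of $K_\omegaone$-minor-freeness) to get countable colorings of the pieces, and glue them via $\omega\times\omega$-valued colors. The only cosmetic difference is that you use a nested exhaustion with least-index bookkeeping while the paper uses a disjoint partition, which changes nothing in the gluing step.
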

\begin{proof}
Suppose $\kappa$ is a cardinal and $(\kappa_n \mid n \in \omega)$ is an increasing sequence of cardinals converging to $\kappa$. Assume that $\kappa_n < \hc$ for all $n \in \omega$. We will show that $\kappa$ is also strictly less than $\hc$.

Take a graph $G$ of size $\kappa$ whose chromatic number is $\omegaone$ and which has no $K_\omegaone$ minor. Partition the vertex set into countably many parts of increasing size corresponding to the cofinal sequence $(\kappa_n \mid n \in \omega)$ and consider the induced subgraphs $\{G_n \mid n \in \omega\}$. Now each graph has size less than $\hc$, thus it is either countably chromatic or contains $K_\omegaone$ as a minor. As the entire graph has no $K_\omegaone$ minor the latter is impossible, so it must be the case that $G_n$ is countably chromatic for every $n \in \omega$. Fix the countable colorings $c_n : G_n \to \omega$, we will show there is a countable coloring of the entire $G$. Define $c: G \to \omega\times\omega$ as $c(v) = (n,k)$ if $v \in G_n$ and $c_n(v) = k$. Now if $\{u,v\}$ is an edge in $G$ and $c(u) = c(v)$, then by the equality in the first coordinate both vertices are part of the same $G_n$ and by the equality in the second coordinate $c_n(u) = c_n(v)$. However, since these were induced subgraphs the edge $\{u,v\}$ is present in $G_n$ so $c_n$ is not a good coloring, a contradiction.
\end{proof}

Before proving the main theorem we need a result about $T$-graphs. The proof can be found in \cite[\textsection 2]{pitz2022}.

\begin{proposition} \label{branch_minor}
Suppose $\kappa$ is an uncountable regular cardinal. If $G$ is a $T$-graph and $T$ has a $\kappa$-branch, then $K_\kappa$ is a minor of $G$.
\end{proposition}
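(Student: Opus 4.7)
The plan is to exhibit $\kappa$ pairwise disjoint non-empty subsets $(X_\alpha)_{\alpha<\kappa}$ of $V_G$, each inducing a connected subgraph of $G$, with an edge of $G$ between $X_\alpha$ and $X_\beta$ for every $\alpha\neq\beta$; this is by definition a $K_\kappa$ minor of $G$.

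Let $B=(b_\gamma)_{\gamma<\kappa}$ enumerate the $\kappa$-branch by height. Since $B$ is a maximal chain in $T$, one has $\pred(b_\gamma)=\{b_\beta : \beta<\gamma\}$ for every $\gamma<\kappa$, so the $T$-graph hypothesis supplies, for each $\gamma$, a cofinal-in-$\gamma$ set of indices $\beta<\gamma$ with $\{b_\beta,b_\gamma\}\in E$, and also the successor edge $\{b_{\gamma'},b_\gamma\}\in E$ whenever $\gamma=\gamma'+1$. As a preliminary I would prove, by transfinite induction on $\gamma$, the following interval connectivity lemma: for every $\beta\le\gamma<\kappa$ the induced subgraph of $G$ on $\{b_\delta : \beta\le\delta\le\gamma\}$ is connected. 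The successor step uses the successor edge and the inductive hypothesis; the limit step uses the cofinal-neighbor property to pick $\delta$ with $\beta\le\delta<\gamma$ and $\{b_\delta,b_\gamma\}\in E$ and recurses on $\delta$.

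The main construction would proceed by transfinite recursion on $\alpha<\kappa$, maintaining $|X_\alpha|<\kappa$. At stage $\alpha$, the set $U_\alpha=\bigcup_{\beta<\alpha}X_\beta$ has cardinality less than $\kappa$; by regularity of $\kappa$ there is $\sigma_\alpha<\kappa$ with $U_\alpha\cap B\subseteq\{b_\gamma : \gamma<\sigma_\alpha\}$. I would pick a limit ordinal $\mu_\alpha>\sigma_\alpha$ and place $b_{\mu_\alpha}$ into $X_\alpha$. To secure an edge between $X_\alpha$ and each previously constructed $X_\beta$ with $\beta<\alpha$, I would exploit the cofinal-neighbor set of $b_{\mu_\alpha}$ together with further cofinal descents to attach to $X_\alpha$ a short path in $G$ from $b_{\mu_\alpha}$ down to a vertex that is $G$-adjacent to some vertex of $X_\beta$, with the interval connectivity lemma certifying existence of such a path inside $B$.

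The main obstacle is realising these $\alpha$ many descents simultaneously while keeping the resulting attached paths pairwise disjoint and disjoint from every previously built $X_{\beta'}$. The enabling observation is that at every limit vertex encountered on a descent, the cofinal-neighbor set is unbounded in the predecessors while only fewer than $\kappa$ vertices have been used at stage $\alpha$, so fresh cofinal neighbors always exist; a careful bookkeeping—selecting $\mu_\alpha$ of sufficient cofinality and staggering the cofinal-neighbor choices across the $\alpha$ descending paths—lets the recursion close through all $\kappa$ stages.
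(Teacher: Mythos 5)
Your preliminary steps are fine: a $\kappa$-branch enumerated by height is downward closed, so $\pred(b_\gamma)=\{b_\beta : \beta<\gamma\}$, and the interval-connectivity lemma is correct. The construction itself, however, has a gap that is not a matter of deferred bookkeeping but of the global design. In your recursion each $X_\alpha$ is \emph{finished} at stage $\alpha$ and has size less than $\kappa$, and edges to later sets are to be supplied by the later sets reaching down. But for every $\gamma>\alpha$ the set $X_\gamma$ must contain a vertex adjacent to $X_\alpha$, and these witnesses are pairwise distinct because the $X_\gamma$ are disjoint; hence $X_\alpha$ must have $\kappa$ many distinct neighbours in $G$, and since $|X_\alpha|<\kappa$ and $\kappa$ is regular, some single vertex of $X_\alpha$ would have to have degree $\kappa$. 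Nothing in the definition of a $T$-graph, nor in your choices, provides this: in a minimal $T$-graph on $T=(\kappa,\in)$ in which each limit vertex $b_\lambda$ is joined exactly to one cofinal set of order type $\mathrm{cf}(\lambda)$ consisting of successor vertices, your top vertex $b_{\mu_\alpha}$ has degree $\mathrm{cf}(\mu_\alpha)+1<\kappa$, and the finitely many vertices on each attached path are not selected with their degrees in mind. So nothing prevents every vertex you place into some $X_\alpha$ from having degree $<\kappa$; once its fewer than $\kappa$ neighbours have been consumed by earlier stages, no later set can attach to it and the recursion cannot close. (One can in fact show by a pressing-down/closure argument that \emph{some} vertices of the branch must have degree $\kappa$, but your construction makes no attempt to capture such vertices, and even if it did, the later sets would still have to reach their neighbourhoods while staying connected and disjoint from everything already used.) A correct construction must return to, or grow, the branch sets unboundedly often rather than completing each in a single pass.

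The second, related problem is that your closing paragraph is exactly where the content of the proposition lies. The cofinal-neighbour property gives no downward targeting: descending from $b_{\mu_\alpha}$, each step only lands somewhere unbounded below the current vertex, and the path supplied by the interval-connectivity lemma runs through uncontrolled intermediate vertices, so there is no reason it avoids the fewer than $\kappa$ vertices already used, let alone terminates next to the prescribed small set $X_\beta$; choosing $\mu_\alpha$ of large cofinality does not address either issue, since the congestion occurs low down, near the fixed targets. Note also that the paper itself does not prove this proposition but cites \cite{pitz2022}; the step you delegate to ``careful bookkeeping'' is precisely the difficulty that proof has to overcome, so as it stands your argument does not establish the statement.
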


Our main result about the uncountable Hadwiger cojecture number will follow from the following more general theorem.

\begin{theorem} \label{special--chromatic}
Suppose $\kappa$ and $\lambda$ are infinite cardinals. The existence of a graph of size $\lambda$ whose chromatic number is $\kappa^+$ and has no $K_{\kappa^+}$ minor is equivalent to the existence of a non-$\kappa$-special tree of size $\lambda$ with no branches of length $\kappa^+$.
\end{theorem}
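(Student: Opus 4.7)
My plan is to prove the two implications of the biconditional separately, exploiting the tree-graph correspondence developed in the preceding sections.

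For the graph-to-tree direction, suppose $G$ has size $\lambda$, chromatic number $\kappa^+$, and no $K_{\kappa^+}$-minor. I would apply the Brochet--Diestel construction referenced in the introduction to endow $V(G)$ with a tree order $T$ of size $\lambda$ realizing $G$ as a $T$-graph, i.e.\ so that $G$ sits as a subgraph of the comparability graph of $T$ satisfying the cofinality condition of a $T$-graph. Proposition~\ref{branch_minor}, taken contrapositively, forces $T$ to have no $\kappa^+$-branch, since such a branch would produce a $K_{\kappa^+}$-minor of $G$. Moreover $T$ is non-$\kappa$-special: any $f\colon T\to\kappa$ injective on chains would properly $\kappa$-color the comparability graph of $T$, hence its subgraph $G$, contradicting $\chi(G)=\kappa^+$.

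For the tree-to-graph direction, let $T$ be a non-$\kappa$-special tree of size $\lambda$ with no $\kappa^+$-branch. Since no such branch exists, every $t\in T$ has height strictly less than $\kappa^+$, so $\pred(t)$ has cofinality at most $\kappa$. I fix, for each $t$, a cofinal subset $N_t\subseteq\pred(t)$ of cardinality $\le\kappa$ (the immediate predecessor at successor heights), and define $G$ on vertex set $T$ with edges $\{s,t\}$ whenever $s\in N_t$, augmented by the clique edges $\{s,s'\}$ for $s,s'\in N_t$. As $N_t$ is a chain in $T$, all these edges lie in the comparability graph, and $G$ remains a $T$-graph. The $T$-graph theory of \cite{pitz2022} rules out $K_{\kappa^+}$-minors of $G$ in the absence of a $\kappa^+$-branch, while greedy coloring by induction on height---each vertex having $\le\kappa$ already-colored lower neighbors---yields $\chi(G)\le\kappa^+$.

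The crux of the argument is to show $\chi(G)>\kappa$. From a hypothetical proper coloring $c\colon T\to\kappa$ I would extract a $\kappa$-special function on $T$, contradicting the assumption. The clique edges on $N_t$ already force $c\restriction N_t$ to be injective, and the cofinality of $N_t$ in $\pred(t)$ constrains how colors may repeat along chains of $T$. The main technical obstacle is to translate these local constraints into the global injectivity-on-chains that is needed; this is plausibly where one must refine the choice of the cofinal subsets $N_t$ (for instance, to ensure they behave coherently along branches) or augment the edge set of $G$ further within the comparability graph. Once this chain-injectivity is in hand, $c$ itself is a $\kappa$-special function on $T$, the sought contradiction, and combining the two directions completes the theorem.
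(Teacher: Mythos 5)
Your proposal has genuine gaps in both directions. In the graph-to-tree direction you assert that the Brochet--Diestel construction ``endow[s] $V(G)$ with a tree order $T$ \ldots so that $G$ sits as a subgraph of the comparability graph of $T$.'' That is not what \cite{brochetdiestel1994} provides: the construction produces a \emph{partition} tree, i.e.\ pairwise disjoint connected bags $V_t$ (singletons at successor levels, but of size up to $\mathrm{cf}|\alpha|$, hence possibly of size $\kappa$, at limit levels $\alpha$), and it is the quotient graph on the bags --- a minor of $G$ --- that is a $T$-graph, not $G$ itself. Realizing $G$ itself as a $T$-graph on its own vertex set is a much stronger, unjustified claim; the whole point of partition classes is that at a limit level no single vertex need be adjacent to cofinally many of the previously chosen vertices along the branch. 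Consequently your one-line argument ``a specializing $f\colon T\to\kappa$ properly $\kappa$-colors the comparability graph, hence its subgraph $G$'' does not apply as stated. The correct version needs the extra bookkeeping: since (by the contrapositive of Proposition~\ref{branch_minor} applied to the quotient $T$-graph, plus transitivity of minors) the tree has no $\kappa^+$-branch, every bag has size $\le\kappa$, and one turns a specializing $f\colon T\to\kappa$ into a proper $\kappa$-coloring of $G$ by fixing $\kappa$ pairwise disjoint sets $A_\alpha\subseteq\kappa$ of size $\kappa$ and injecting each $V_t$ into $A_{f(t)}$, using that edges of $G$ between distinct bags only join bags indexed by comparable nodes. (One must also first pass to a connected component with the same properties.)

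In the tree-to-graph direction the gap is the one you yourself flag and leave open: for your sparsified graph (edges only to the chosen cofinal sets $N_t$ plus cliques on each $N_t$), a proper $\kappa$-coloring gives only local injectivity, and there is no argument upgrading this to injectivity on all chains of $T$; so $\chi(G)>\kappa$ is not established, and this is exactly the heart of the matter, not a routine refinement. Your appeal to \cite{pitz2022} for ``no $\kappa^+$-branch implies no $K_{\kappa^+}$ minor'' is also backwards relative to Proposition~\ref{branch_minor}, which states the converse. Both problems vanish if, as the paper does following \cite{komjath2017}, you take the \emph{full} comparability graph of $T$: chains are then cliques, so any proper $\kappa$-coloring is literally a $\kappa$-specializing function, giving $\chi>\kappa$; a greedy coloring along a well-order refining height uses at most $\kappa^+$ colors since each node has at most $\kappa$ predecessors (no $\kappa^+$-branches); and the absence of a $K_{\kappa^+}$ minor for comparability graphs of trees without $\kappa^+$-branches is precisely what Komj\'ath's argument supplies (and it would then cover your subgraph as well). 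As written, however, the proposal does not prove either implication.
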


\begin{proof}
Given a non-$\kappa$-special tree of size $\lambda$ with no branches of length $\kappa^+$ the proof of \cite[Theorem 2]{komjath2017} shows that the comparability graph of this tree has chromatic number $\kappa^+$ and has no $K_{\kappa^+}$ minor, it is clear that the size of the comparability graph is the same as the size of the tree it is constructed from.

In the other direction assume $G$ is a graph with the aforementioned properties. We can assume that $G$ is connected as $G$ must have a component with the required properties and also that the vertex set of $G$ is $\lambda$. Note also that $\lambda \ge \kappa^+$. Using a construction by Brochet and Diestel, \cite[Theorem 4.2.]{brochetdiestel1994}, we will build a tree $T$ which will have the required properties. For completeness we reproduce the construction.

The construction is by induction. We will construct a tree level by level. To each node $t \in T$ we will associate two subsets $V_t \subseteq C_t$ of $V$ both inducing connected subgraphs. In the end the graph obtained by contracting the sets $V_t$ to a point will be a $T$-graph. For every $\alpha$ the following conditions will be satisfied:
\begin{enumerate}
    \item if $\alpha = \beta + 1$ and $t \in T_\alpha$ and $s$ is the predecessor of $t$ in $T_\beta$, then $C_t$ is a component of $C_s \setminus V_s$ and $V_t = \set{x_t}$ such that $x_t \in C_t$ and $x_t$ is connected to some element in $V_s$; also if $C$ is a component of $C_s \setminus V_s$, then there is a unique node $r \in T_\alpha$ above $s$, such that $C = C_r$,
    \item if $\alpha$ is limit and $t \in T_\alpha$, then $C_t$ is a component of $\bigcap_{s \in \pred(t)} C_s$ and $V_t$ induces a non-empty connected subgraph of $C_t$ such that the following set $\set{s \in \pred(t) \mid V_s \text{ is connected to } V_t \text{ in } G}$ is cofinal in $\pred(t)$; also if $C$ is a component of $\bigcap_{s \in \pred(t)} C_s$, then there is a unique node $r \in T_\alpha$ above $\pred(t)$, such that $C = C_r$,
    \item if $t \in T_{\alpha}$, then $s \in \pred(t)$ if and only if $C_t \subsetneq C_s$.
\end{enumerate}

The first step is to define the root $T_0 := \{t_0\}$ and $V_{t_0} := \set{0}$, where $0 \in \lambda = V$ and $C_{t_0} := V$. We now continue the construction by levels.

If we are at a successor stage, i.e.\ we have defined the tree up to level $\alpha + 1$ we do the following: for every node $t$ in $T_\alpha$ consider the graph induced by $C_t \setminus V_t$ and let $\set{C_i \mid i < \mu(t)}$ be all of its components. In $T_{\alpha+1}$ the node $t$ will have $\mu(t)$ many successors $\set{t^i \mid i < \mu(t)}$ and we put $C_{t^i} := C_i$. Let $y$ be the least element of $C_{t^i}$ (we assumed $V = \lambda$) and put $V_{t^i} := \set{x^i_t}$, where $x^i_t \in C_{t^i}$ is a vertex connected to $V_t$ whose distance to $y$ is minimal. Note that if $C_t \setminus V_t$ is empty, $t$ will have no successors in $T$.

At limit stages we proceed similarly. Suppose that we have defined the tree up to level $\alpha$ and $\alpha$ is a limit ordinal. We consider every branch $b$ of $T_{<\alpha}$ and determine the set $\bigcap_{s \in b} C_s$, if it is empty $b$ will be a branch in $T$ as well, otherwise let $\set{C_i \mid i < \mu(b)}$ be all of its components. We put nodes $(b^i)_{i < \mu(b)}$ into $T_\alpha$, all of them extending $b$ and pairwise incomparable. Fix a $j < \mu(b)$. The set $C_{b^j}$ is simply $C_j$ again. It is enough now to define the set $V_{b^j}$.

\begin{claim}
There is a subset $V'$ of $C_j$ inducing a connected subgraph in $G$ of size at most $\mathrm{cf}|\alpha|$ such that the set of nodes $\set{t \in b| V_t \text{ is connected to } V' \text{ in }G}$ is cofinal in $b$.
\end{claim}
\begin{proof}
First we show that $\set{t \in b| V_t \text{ is connected to } C_j \text{ in }G}$ is cofinal in $b$. Suppose $s$ is a node in $b$. Note that $C_j \subseteq C_s$ so there must be vertices $u \in C_j$ and $v \in C_s \setminus C_j$ which are connected. As $v \not\in C_j$ let $s' \in b$ be the least node such that $v \not\in C_{s'}$, note that we have $u \in C_{s'}$.

First suppose $\mathrm{ht}(s')$ is a successor ordinal, and the predecessor of $s'$ is $\overline{s}$. We obtain that $u,v \in C_{\overline{s}}$ but $v \not\in C_{s'}$ and since $u$ is connected to $v$ in $G$ we must have that $v \in V_{\overline{s}}$. Hence $V_{\overline{s}}$ is connected to $C_j$.

The case when the height of $s'$ is limit we obtain that $u,v \in \bigcap_{r \in \pred(s')} C_r$. However since $\{u,v\}$ forms an edge both vertices must be contained in the same component of $\bigcap_{r \in \pred(s')} C_r$, i.e.\ $v$ would have to be an element of $C_{s'}$.

Let $b' \subseteq b$ be a cofinal subset of the branch of size $\mathrm{cf}|\alpha|$ such that each $s \in b'$ has the property that $V_s$ is connected to $C_j$. For each $s \in b'$ choose a witness $x^i_s \in C_j$ connected to $V_s$. Let $V'$ be any subset of $C_i$ inducing a connected graph of size at most $\mathrm{cf}|\alpha|$ containing all the vertices $x^i_s$.
\end{proof}

The claim defines the sets $V_{b^i}$ and the construction is finished.

It is clear that in each step of the induction we use up at least one vertex of $G$ that we put in some $V_t$, so the length of the induction is some ordinal $\delta$ such that $|\delta| = \lambda$. However note that at successor steps of the induction we always chose a vertex with minimal distance from the least vertex which was not part of some $V_t$ defined before. This implies that the vertex $\alpha < \lambda$ was the least vertex at the latest in step $\omega \cdot \alpha$ of the induction and was put into some $V_t$ after finitely many steps, i.e.\ $\alpha$ belongs to some $V_t$ for a $t$ such that $\mathrm{ht}(t) < \omega\cdot\alpha + \omega$. From this we also obtain that the height of the constructed tree is at most $\lambda$. 

We make a few observations. The sets $(V_t)_{t \in T}$ form a partition of $V$ and each $V_t$ induces a connected subgraph of $G$. The graph $(T, F)$, where $$\set{s, t} \in F \mskip6mu \equiv \mskip6mu V_s \text{ is connected to } V_t \text{ in } G$$ is a $T$-graph and also a minor of $G$.

To see that it is a $T$-graph, note that the way we defined the sets $V_t$ it is clear that each $t$ is $F$-related to its predecessor and if the height of $t$ is limit the previous claim implies that it is cofinally often connected to its predecessors.

Thus it is enough to see that $(T, F)$ is a subgraph of the comparability graph of $T$. Consider any $u, v \in G$ which are connected and the corresponding $V_s$ and $V_t$ so that $u \in V_s, v \in V_t$. Suppose $s$ is incomparable with $t$.

In the first case assume that there is a limit $\alpha$ and a branch $b$ in $T_{<\alpha}$ such that there are nodes $\overline{s}$ and $\overline{t}$ directly above $b$ such that $\overline{s} \le s$ and $\overline{t} \le t$. We have that $u,v \in \bigcap_{r \in b} C_r$ and since they form an edge both $u$ and $v$ belong to the same component of $\bigcap_{r \in b} C_r$ and by the second induction hypothesis we obtain that $\overline{s} = \overline{t}$.

The case when the split happens on a successor level is proven analogously using the first induction hypothesis.

The following claim finishes the proof.

\begin{claim}
$T$ is a non-$\kappa$-special tree without $\kappa^+$-branches of size at most $\lambda$.
\end{claim}
\begin{proof}\renewcommand{\qedsymbol}{}
The size of $T$ is obviously at most $\lambda$. Suppose $T$ has a $\kappa^+$-branch, then by Proposition \ref{branch_minor} $K_{\kappa^+}$ is a minor of $(T,F)$ thus by transitivity it is also a minor of $G$ which is impossible. Thus all branches have size $\le \kappa$ and so each $V_t$ has size $\le \kappa$.

If $T$ were $\kappa$-special there would be a specializing function $f: T \to \kappa$. Let $\set{A_\alpha \mid \alpha \in \kappa}$ be pairwise disjoint $\kappa$-sized subsets of $\kappa$. For every $t \in T$ let $g_t: V_t \to A_{f(t)}$ be any injection and define a coloring $c: G \to \kappa$ as follows: for every $u \in G$ there is a unique $t \in T$ such that $u \in V_t$ thus let $c(u)$ be $g_t(u)$. To see that $c$ is proper, consider any two vertices $u,v \in G$ which are connected. If both are in the same $V_t$ then as $g_t$ is injective they clearly get different colors, else there are different $s, t \in T$ so that $u \in V_s$ and $v \in V_t$, since $(T,F)$ is a $T$-graph we get that $s < t$ or vice versa, then however, $f(s) \neq f(t)$ and subsequently $c(u) \in A_{f(s)}$ and $c(v) \in A_{f(t)}$ and again they get different colors. Thus the chromatic number of $G$ is at most $\kappa$, a contradiction.
\end{proof}
\end{proof}
\begin{remark}
From now on if a graph $G$ is given we will freely denote by $T_G$ the tree arising from the previous construction without explicitly mentioning so.
\end{remark}

Using this theorem we can easily deduce the relationship between $\hc$ and $\st$, thus the inequality in Theorem \ref{hc<st} can be reversed.

\begin{corollary} \label{hc=st}
$\hc = \st$.
\end{corollary}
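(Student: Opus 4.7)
The plan is to combine the already-established inequality $\hc \le \st$ from Theorem \ref{hc<st} with its reverse $\st \le \hc$, which should drop out almost immediately from Theorem \ref{special--chromatic} specialized to $\kappa = \omega$. Since $\omega^+ = \omegaone$, that theorem reads: for each infinite $\lambda$, a graph of size $\lambda$ with chromatic number $\omegaone$ and no $K_\omegaone$ minor exists if and only if a non-special tree of size $\lambda$ with no uncountable branch exists. The definitions of $\hc$ and $\st$ then sit on the two sides of this equivalence, so each witness of $\hc$ should convert into a witness of $\st$ of the same size.

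Concretely, I would take a graph $G$ with $|G| = \hc$, $\chi(G) \ge \omegaone$, and no $K_\omegaone$ minor, and then run the Brochet--Diestel construction of $T_G$ from the proof of Theorem \ref{special--chromatic}. The size bound on $T_G$ in that construction gives $|T_G| \le |G| = \hc$. If $T_G$ had an uncountable branch, Proposition \ref{branch_minor} would produce a $K_\omegaone$ minor in the $T$-graph $(T_G, F)$, and since $(T_G, F)$ is a minor of $G$, transitivity would yield a $K_\omegaone$ minor in $G$, contradicting our hypothesis on $G$. Likewise, if $T_G$ were special, the final claim in the proof of Theorem \ref{special--chromatic} would supply a countable proper coloring of $G$, contradicting $\chi(G) \ge \omegaone$. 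Therefore $T_G$ witnesses $\st \le |T_G| \le \hc$.

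The only slightly delicate point, and the one I would be most careful about, is that the statement of Theorem \ref{special--chromatic} is phrased with chromatic number \emph{equal to} $\kappa^+$, whereas the definition of $\hc$ only requires the chromatic number to be uncountable, hence possibly strictly above $\omegaone$. I would therefore want to note that the relevant direction of the proof of Theorem \ref{special--chromatic} actually uses only $\chi(G) > \kappa$ (the specializing function is converted into a coloring into $\kappa$, so anything beyond $\chi(G) > \kappa$ is unused); with $\kappa = \omega$ this is precisely the assumption $\chi(G) \ge \omegaone$ guaranteed by the definition of $\hc$. Once this is flagged, the two-line chain $\st \le |T_G| \le |G| = \hc \le \st$ closes the argument.
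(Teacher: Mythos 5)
Your proposal is correct and follows essentially the same route as the paper: the reverse inequality $\st \le \hc$ is obtained by applying Theorem \ref{special--chromatic} with $\kappa=\omega$ to a witness graph of size $\hc$, and combining with Theorem \ref{hc<st}. Your extra remark that the relevant direction of that theorem only uses $\chi(G) > \kappa$ (so ``uncountably chromatic'' suffices, even if $\chi(G) > \omegaone$) is a sensible clarification of a point the paper leaves implicit, but it does not change the argument.
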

\begin{proof}
We show that $\hc \ge \st$. Given a graph of size $\hc$ with uncountable chromatic number and no $K_{\omegaone}$ minor the previous theorem implies that there is a non-special tree with no uncountable branch of size at most $\hc$.
\end{proof}

The previous construction has many useful properties. Suppose $\kappa$ is a regular cardinal. We have the following:
\begin{enumerate}
    \item if $T_G$ has a $\kappa$-branch, then $K_\kappa$ is a minor of $G$,
    \item if $T_G$ is $\kappa$-special, then the chromatic number of $G$ is at most $\kappa$,
    \item if $T_G$ has a $\kappa$-sized antichain, then $G$ has an independent set of size $\kappa$.
\end{enumerate}
The first two items follow from Proposition \ref{branch_minor} and the last claim in the main theorem. The last item will follow from the characterization in the next section.

We can further show that the implication in item (1) can be reversed but not in (2) nor (3).

\begin{proposition} \label{t_g_properties}
Suppose $\kappa$ is a regular cardinal. Let $G$ be a graph, Then the following holds:
\begin{enumerate}
    \item if $K_\kappa$ is a minor of $G$, then $T_G$ has a branch of size at least $\kappa$,
    \item there is a countably chromatic graph $G$ such that $T_G$ has a $\kappa$-branch,
    \item there is a graph with an independent set of size $\kappa$ such that $T_G$ is isomorphic to $(\kappa, \in)$.
\end{enumerate}
\end{proposition}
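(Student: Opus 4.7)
The three items call for somewhat different arguments. For (1), the plan is to lift the given $K_\kappa$-minor of $G$ to a chain of size $\kappa$ in $T_G$. For each $v \in V_G$ let $t(v) \in T_G$ denote the unique node with $v \in V_{t(v)}$; the $T$-graph property forces $t(u)$ and $t(v)$ to be comparable in $T_G$ whenever $\{u,v\} \in E_G$ and $t(u) \neq t(v)$. Hence for every $X \subseteq V_G$ connected in $G$, the set $\{t(v) : v \in X\}$ is connected in the comparability graph of $T_G$. The key lemma is that any connected subset $Y$ of the comparability graph of a tree has a unique $\leq$-minimum lying in $Y$: given $x \in Y$ of minimal height, a path $x = z_0, \ldots, z_n = y$ in the comparability graph together with the fact that predecessors of a common node are linearly ordered shows by induction that $x \leq z_i$ for every $i$.

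Applied to a $K_\kappa$-minor $\{X_\gamma : \gamma < \kappa\}$, the lemma produces roots $r_\gamma = t(v^*_\gamma)$ with $v^*_\gamma \in X_\gamma$ and $r_\gamma \leq t(v)$ for all $v \in X_\gamma$. For $\gamma \neq \delta$, an edge between $X_\gamma$ and $X_\delta$ yields a common node above both $r_\gamma$ and $r_\delta$, so the roots form a chain in $T_G$. If $T_G$ had no branch of size $\geq \kappa$, every node would have height $< \kappa$, and regularity of $\kappa$ together with the pigeonhole principle would supply some $r \in T_G$ with $\kappa$-many $\gamma$ satisfying $r_\gamma = r$; the corresponding $v^*_\gamma$ would then form $\kappa$ distinct elements of $V_r$, forcing $|V_r| \geq \kappa$. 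However, the construction gives $|V_r| \leq \max(1, \mathrm{cf}|\mathrm{ht}(r)|) < \kappa$, a contradiction.

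For (2) and (3), I would take $G = K_{\kappa,\kappa}$, which is $2$-chromatic and has each side of the bipartition as an independent set of size $\kappa$. Run the construction of $T_G$: since every edge of $K_{\kappa,\kappa}$ crosses the bipartition, the singleton $V_{t_{\alpha+1}}$ picked at any successor step automatically lies on the side opposite to $V_{t_\alpha}$, so chosen vertices alternate between the two sides along the branch. Removing any subset that leaves at least one vertex on each side keeps $K_{\kappa,\kappa}$ connected, so at every stage there is a single remaining component and a unique child. At a limit stage $\alpha$, alternation ensures both sides have been used cofinally below $\alpha$, so any singleton chosen for $V_{t_\alpha}$ is connected in $G$ to cofinally many earlier $V_s$, fulfilling the cofinal-connectivity condition. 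The construction thus continues for exactly $\kappa$ steps without branching, yielding $T_G \cong (\kappa, \in)$, which simultaneously witnesses (2) and (3).

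The main obstacle is the chain-lifting argument in (1); it hinges on establishing the connectedness lemma for comparability graphs of trees and on the inequality $|V_r| < \kappa$, which in turn requires regularity of $\kappa$ to ensure all heights in $T_G$ are $< \kappa$ under the hypothesis. For (2) and (3) the only delicate point is maintaining the cofinal-connectivity condition at limit stages, which the alternating scheme handles automatically.
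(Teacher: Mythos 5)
Your proposal is correct and follows essentially the same route as the paper: for (1) you define for each branch set its minimal node in $T_G$ (the paper's $t_\alpha$), show these minima form a chain via edges between branch sets, and then use regularity together with the bound $|V_t|<\kappa$ to rule out the chain having fewer than $\kappa$ distinct members, exactly as in the paper; for (3) you use the same witness $K_{\kappa,\kappa}$ and the same "no splitting" analysis of the construction. The only deviation is in (2), where the paper takes the once-subdivided $K_\kappa$ and quotes item (1), while you reuse $K_{\kappa,\kappa}$ and your $(\kappa,\in)$ analysis --- an equally valid (indeed economical) choice, granting the construction's bookkeeping that every vertex is absorbed at a bounded stage so the chain has order type exactly $\kappa$.
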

\begin{proof}
For (1) suppose $\set{U_\alpha \mid \alpha < \kappa}$ are the disjoint sets of vertices forming a $K_\kappa$ minor and assume also that there is no node in $T_G$ at height $\kappa$, otherwise there clearly is a branch of length at least $\kappa$. For each $\alpha$ we define a node in the tree $T_G$, put $t_\alpha := \min \set{t \in T_G \mid U_\alpha \cap V_t \neq \emptyset}$ (the sets $V_t$ are defined as in Theorem \ref{special--chromatic}).
\begin{claim}
For each $\alpha$ the node $t_\alpha$ is well-defined.
\end{claim}
\begin{proof}
Suppose $s,t$ are incomparable such that $U_\alpha \cap V_s \neq \emptyset$ and also $U_\alpha \cap V_t \neq \emptyset$. Let $x_s$ be an element of $U_\alpha \cap V_s$ and $x_t$ an element of $U_\alpha \cap V_t$. As $U_\alpha$ is connected, there is a finite path $(x_i)_{i < n}$ in $U_\alpha$ such that $x_0 = x_s$ and $x_{n-1} = x_t$. To each $x_i$ we van associate a node $r_i$ such that $x_i \in V_{r_i}$ (this mapping need not be injective). Since $x_i$ is connected to $x_{i+1}$ we obtain that $r_i$ is comparable with $r_{i+1}$ for each $i < n-1$. Now there must exist some $r_j$ such that $r_j \le r_i$ for each $i \neq j$. We can prove this by induction. If $n\le3$ this is clear. Suppose $k < n$ and $(r_i)_{i<k}$ has a least element, say $r_l$. Now $r_l \le r_{k-1}$ and $r_k$ is comparable with $r_{k-1}$, hence $r_k$ is comparable with $r_l$ and the least element of $(r_i)_{i \le k}$ is $\min \set{r_k, r_l}$.

Let $r_j$ be the least element of $(r_i)_{i<n}$, then $r_j \le s, t$ and $x_j \in V_{r_j} \cap U_\alpha$.
\end{proof}

We claim that for every $\alpha, \beta < \kappa$ the nodes $t_\alpha$ and $t_\beta$ are comparable. If $\alpha$ and $\beta$ are given consider the nodes $s_\alpha, s_\beta \in T_G$ such that $x \in U_\alpha$ and $y \in U_\beta$ are connected in $G$ and $x \in V_{s_\alpha}$ and $y \in V_{s_\beta}$, so $s_\alpha$ is comparable with $s_\beta$. We then have that $t_\alpha, t_\beta \le \max \set{s_\alpha, s_\beta}$, hence $t_\alpha$ and $t_\beta$ are comparable. We are almost done but notice that the mapping $\alpha \mapsto t_\alpha$ need not be injective, however, as the sets $\set{U_\alpha \mid \alpha < \kappa}$ are pairwise disjoint and the sets $V_t$ have size less than $\kappa$, this mapping has the property that the preimage of each node has size $<\kappa$ and so there must be $\kappa$ many unique nodes $t_\alpha$ all comparable to each other forming a branch in $T_G$.

Now (2) is easy, just consider the complete graph on $\kappa$ vertices and subdivide each edge once, this is clearly a bipartite graph, hence $2$-colorable but from the previous part $T_G$ will have a $\kappa$-branch.

The witness for (3) is simply the complete bipartite graph with both partitions of size $\kappa$, clearly, either partition forms an independent set of size $\kappa$. Note that this graph is $\kappa$-connected. Following the construction in Theorem \ref{special--chromatic} it is not hard to see that $\kappa$-connectivity implies that at no step of the construction does the graph split, i.e.\ the tree $T_G$ is simply a single $\kappa$-branch.
\end{proof}

Let us conclude this section with an observation about the connectedness of the graphs which are counterexamples to the uncountable Hadwiger conjecture. The next proposition claims that a graph with no $K_\kappa$ minor cannot be $\kappa$-connected.

\begin{proposition} \label{minor_connectedness}
Suppose $\kappa$ is an infinite cardinal. If $G$ is $\kappa$-connected, then $G$ contains a subdivision of $K_{\kappa}$.
\end{proposition}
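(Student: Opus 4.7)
The plan is to construct a subdivision of $K_\kappa$ inside $G$ by a transfinite recursion of length $\kappa$, simultaneously picking branch vertices $(v_\xi)_{\xi < \kappa}$ and, for each pair $\eta < \xi < \kappa$, a finite path $P_{\xi,\eta}$ from $v_\xi$ to $v_\eta$ whose interior avoids all other branch vertices and all previously chosen path interiors. Any such system is by definition a subdivision of $K_\kappa$. A preliminary remark: $\kappa$-connectivity for infinite $\kappa$ forces $|V_G| \ge \kappa$, so new vertices are always available when needed.

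Suppose inductively that at stage $\xi < \kappa$ the vertices $(v_\eta)_{\eta < \xi}$ and paths $(P_{\eta,\zeta})_{\zeta < \eta < \xi}$ have been chosen with the desired disjointness properties. Let $U_\xi$ be the union of all branch vertices and all path interiors produced so far; since $|\xi| < \kappa$ and each path is finite, $|U_\xi| < \kappa$, so we can pick $v_\xi \in V_G \setminus U_\xi$. Then, by an inner recursion on $\eta < \xi$, define $P_{\xi,\eta}$ as follows. At substage $\eta$ form the forbidden set
$$S = \bigl(U_\xi \cup \bigcup_{\eta' < \eta} \mathrm{int}(P_{\xi,\eta'})\bigr) \setminus \{v_\xi, v_\eta\},$$
which has $|S| < \kappa$; by $\kappa$-connectivity the graph $G \setminus S$ is still connected and contains both $v_\xi$ and $v_\eta$, so there is a finite path between them in $G \setminus S$. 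Let $P_{\xi,\eta}$ be any such path; its interior is automatically disjoint from $S$, hence from every previously chosen branch vertex other than its endpoints and from every previously chosen path interior.

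The only bookkeeping point is that at every step the forbidden set $S$ has cardinality strictly less than $\kappa$, and this holds uniformly because every ordinal below $\kappa$ has cardinality below $\kappa$ and each path we add is finite; in particular no regularity assumption on $\kappa$ is needed. The only obstacle is thus purely organizational — keeping track of the nested recursions so that once a path interior is constructed it is absorbed into $U_{\xi+1}$ and thereby excluded from every later forbidden set.
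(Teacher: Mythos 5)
Your proposal is correct and follows essentially the same argument as the paper: a double transfinite recursion in which the outer recursion picks the branch vertices and the inner recursion picks, for each earlier branch vertex, a finite connecting path, using at every step that the set of previously used vertices has size $<\kappa$ so that $\kappa$-connectivity yields the next path. Your explicit handling of path interiors versus endpoints and the remark that the forbidden sets stay of size $<\kappa$ without any regularity assumption only make explicit what the paper leaves implicit.
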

\begin{proof}
We will inductively choose elements $\set{v_\alpha \mid \alpha < \kappa} \subseteq V$ and finite sequences of vertices $\set{\mathbf{p}_{\alpha\beta} \mid \alpha < \beta < \kappa}$ so that for each pair of vertices $v_\alpha, v_\beta$ the sequence ${v_\alpha}^\smallfrown {\mathbf{p}_{\alpha \beta}}^\smallfrown v_\beta$ is a path and the collection of all these $\mathbf{p}_{\alpha\beta}$'s is pairwise disjoint. Clearly this forms a subdivision of $K_\kappa$.

Suppose we have constructed $\set{v_\alpha \mid \alpha < \gamma}$ for some $\gamma < \kappa$ and we also have the collection of pairwise disjoint paths $\set{\mathbf{p}_{\alpha\beta} \mid \alpha < \beta < \gamma}$. Consider the subgraph of $G$ induced by the vertices $V_\gamma := V \setminus \bigcup \set{\mathbf{p}_{\alpha\beta} \mid \alpha < \beta < \gamma}$. By our assumption this still induces a connected graph, choose any vertex $v$ from this set different from any vertex included in $\set{v_\alpha \mid \alpha < \gamma}$, this will be the vertex $v_\gamma$.

By induction again we choose the finite sequences $\mathbf{p}_{\alpha\gamma}$. Suppose we have constructed $\set{\mathbf{p}_{\beta\gamma} \mid \beta < \alpha}$ for some $\alpha < \gamma$.  As paths are finite we have that the set $V_\gamma \setminus \bigcup \set{\mathbf{p}_{\beta\gamma} \mid \beta < \alpha}$ still induces a connected graph as we assume $G$ is $\kappa$-connected, so choose any path between $v_\alpha$ and $v_\gamma$ using only these vertices excluding the so far chosen $\set{v_\alpha \mid \alpha < \gamma}$ and this will be our $\mathbf{p}_{\alpha\gamma}$.
\end{proof}

\section{Suslin trees}

Before we state the characterization of Suslin trees we mention a related result by Wagon. He introduced a graph counterpart to a Suslin tree whose existence is actually equivalent to the existence of a Suslin tree, in \cite[Theorem 2.2.]{wagon1978} he proved the following:

\begin{theorem}[Wagon \cite{wagon1978}]
For an infinite cardinal $\kappa$, the following are equivalent:
\begin{enumerate}
    \item a $\kappa^+$-Suslin tree exists,
    \item there exists a triangulated graph $G$ with $\alpha(G) = \kappa < \theta(G)$.
\end{enumerate}
\end{theorem}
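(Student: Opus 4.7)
For $(1) \Rightarrow (2)$, I take $G$ to be the comparability graph of a $\kappa^+$-Suslin tree $T$. The graph is triangulated: in any cycle of length at least $4$, among three consecutive vertices $v_{i-1}, v_i, v_{i+1}$ the middle vertex is comparable with both neighbors, and since predecessors of a common node in a tree always form a chain, casing on whether $v_i$ lies above or below its neighbors produces a chord. Independent sets of $G$ are antichains of $T$, so $\alpha(G) \leq \kappa$ and, after passing to a suitable pruned subtree, we may assume $\alpha(G) = \kappa$. Cliques of $G$ are chains of $T$, each of size at most $\kappa$; covering by $\kappa$ such cliques would bound $|T| \leq \kappa \cdot \kappa = \kappa$, contradicting $|T|=\kappa^+$, so $\theta(G) > \kappa$.

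For the harder direction $(2) \Rightarrow (1)$, I would first shrink $G$ to an induced subgraph of cardinality exactly $\kappa^+$ still satisfying both hypotheses; this is possible because $\theta(G) > \kappa$ permits an inductive selection of $\kappa^+$ vertices whose initial segments successively resist $\kappa$-sized clique covers. Next I would invoke Gavril's representation of chordal graphs as intersection graphs of subtrees of a host tree $T^*$: to each $v \in V(G)$ assign a subtree $S_v \subseteq T^*$ with $u \sim v$ iff $S_u \cap S_v \neq \emptyset$. After rooting $T^*$ and coherently choosing a root for each $S_v$, I would define a partial order $\preceq$ on $V(G)$ by declaring $u \prec v$ when the root of $S_u$ lies strictly below the root of $S_v$ in $T^*$ and $S_u \cap S_v \neq \emptyset$, so that comparability in $\preceq$ coincides with adjacency in $G$.

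Given such an order, the Suslin properties should drop out: antichains of $(V(G),\preceq)$ are independent sets of $G$ of cardinality $\leq \alpha(G) = \kappa$; chains are cliques, and a $\kappa^+$-chain would yield a $K_{\kappa^+}$ subgraph, which could be separated off to produce a strictly smaller counterexample and contradict the preceding size reduction; and the tree has cardinality $\kappa^+$ by construction.

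The hard part is $(2) \Rightarrow (1)$, specifically verifying that the partial order $\preceq$ built from the Gavril representation really is a tree---every downward set well-ordered---rather than merely a poset whose comparability graph is $G$. In the finite case this is immediate from a perfect elimination ordering, but for infinite chordal graphs no such ordering need exist, and one must make transfinite coherent choices of the $S_v$-roots in $T^*$ so that well-foundedness is preserved at every stage. A subsidiary technical point is the initial cardinality reduction: extracting an induced subgraph of size precisely $\kappa^+$ in which both $\alpha \le \kappa$ and $\theta > \kappa$ are preserved requires a careful cofinal induction exploiting the failure of every attempted $\kappa$-sized clique cover.
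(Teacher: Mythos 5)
This statement is quoted by the paper from Wagon's article \cite{wagon1978} and is not proved in the paper at all, so there is no internal proof to compare against; judging your proposal on its own merits, the direction $(1)\Rightarrow(2)$ is essentially right, with two small repairs needed: for chordality you cannot take an arbitrary triple of consecutive cycle vertices (a vertex lying above one neighbour and below the other yields no chord) --- instead pick a cycle vertex of maximal height, whose two neighbours are then both predecessors of it and hence comparable; and $\alpha(G)=\kappa$ does not need a vague ``pruning'' --- if all antichains had size $\le\mu<\kappa$ then the tree would have height $\kappa^+$ with levels of size $\le\mu$, and the Kurepa branch theorem would give a $\kappa^+$-branch, contradicting Suslinity. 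The clique-cover bound $\theta(G)>\kappa$ is correct as you state it.

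The direction $(2)\Rightarrow(1)$, however, has a genuine gap, and it sits exactly where you locate it yourself. Gavril's subtree-intersection representation is a theorem about \emph{finite} chordal graphs; for a graph of size $\kappa^+$ the host $T^*$ would have to be an order tree with branches of uncountable order type (a graph-theoretic tree only has paths of length $\le\omega$, so it cannot even represent the comparability graph of an $\omega_1$-tree), and the existence of such representations for arbitrary infinite triangulated graphs is not a citable fact --- producing the tree structure from the hypotheses $\alpha(G)=\kappa<\theta(G)$ is precisely the substance of Wagon's hard direction, so invoking the representation assumes what is to be proved. The same applies to well-foundedness of your order $\preceq$, which you concede you do not know how to secure. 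The auxiliary steps are also unsubstantiated: a triangulated graph with $\alpha(G)=\kappa<\theta(G)$ can perfectly well contain $K_{\kappa^+}$ (take the disjoint union of $K_{\kappa^+}$ with the comparability graph of a $\kappa^+$-Suslin tree), so $\kappa^+$-chains in your poset are a real possibility, and ``separating them off to get a smaller counterexample'', as well as the initial extraction of an induced subgraph of size exactly $\kappa^+$ with $\theta>\kappa$ preserved, are claims that would each need an actual transfinite argument. As it stands the proposal proves $(1)\Rightarrow(2)$ but only outlines a programme for $(2)\Rightarrow(1)$; a workable route is a direct transfinite decomposition of $G$ into cliques and components in the spirit of Wagon's original argument (or of the Brochet--Diestel construction the paper uses elsewhere), rather than representation theorems imported from the finite theory.
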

The number $\alpha(G)$ denotes the supremum of the sizes of independent sets of $G$ and $\theta(G)$ the least size of a family of cliques which cover the graph. A graph is triangulated if every induced cycle is a triangle.

In our characterization we drop the triangularity requirement and instead of considering the invariant $\theta(G)$ we forbid a specific minor. This result has been independently proven by P. Komjáth and S. Shelah \cite{komjathshelah2021}.

\begin{theorem} \label{suslin_characterization}
Suppose $\kappa$ is a regular cardinal. The existence of a $\kappa$-Suslin tree is equivalent to the existence of a graph of size $\kappa$, which has no independent set of size $\kappa$ and has no $K_{\kappa}$ minor.
\end{theorem}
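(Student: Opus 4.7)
The plan is to handle the two directions separately, leveraging the Brochet--Diestel construction from Theorem \ref{special--chromatic} for the harder direction.

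For the forward direction, let $T$ be a $\kappa$-Suslin tree and consider its comparability graph $G$. Trivially $|G|=\kappa$. Independent sets in a comparability graph correspond exactly to antichains in the tree, so $G$ has no independent set of size $\kappa$. That $G$ admits no $K_\kappa$ minor is already recorded in this paper (in the proof of item (\ref{sh_hc})) via \cite[Theorem 2]{komjath2017}: a $K_\kappa$ minor of the comparability graph of a tree produces a $\kappa$-branch, which a Suslin tree cannot have.

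For the reverse direction, let $G$ be a graph of size $\kappa$ with no independent set of size $\kappa$ and no $K_\kappa$ minor. Feed $G$ to the Brochet--Diestel construction from the proof of Theorem \ref{special--chromatic} to obtain the tree $T_G$ together with its partition $(V_t)_{t \in T_G}$ of $V(G)$. I claim $T_G$ is a $\kappa$-Suslin tree. First, $T_G$ has no $\kappa$-branch: by Proposition \ref{t_g_properties}(1), such a branch would yield a $K_\kappa$ minor in $G$. Hence every node of $T_G$ has height strictly below $\kappa$, and by the construction $|V_t|<\kappa$ for each $t$ (singletons at successor steps, at most $\mathrm{cf}|\alpha|$ at limit step $\alpha<\kappa$). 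Since $V=\bigsqcup_t V_t$ has size $\kappa$ and $\kappa$ is regular, it follows that $|T_G|=\kappa$.

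The crux is to rule out a $\kappa$-sized antichain in $T_G$; this is precisely the item advertised after Theorem \ref{special--chromatic} as depending on the current characterization. Suppose $A\subseteq T_G$ is an antichain. The graph $(T_G,F)$ constructed in Theorem \ref{special--chromatic} is a $T$-graph, hence a subgraph of the comparability graph of $T_G$, which means that for incomparable $s,t$ the sets $V_s$ and $V_t$ are \emph{not} connected by any edge of $G$. Choosing one representative vertex $v_t\in V_t$ for each $t\in A$ therefore produces a set of $|A|$ pairwise non-adjacent vertices, i.e.\ an independent set in $G$. Since $G$ has no independent set of size $\kappa$, we get $|A|<\kappa$, completing the verification that $T_G$ is $\kappa$-Suslin.

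There is no genuine obstacle once the machinery of the previous section is available; the only point requiring care is the translation of antichains into independent sets, and this hinges on the $T$-graph property of $(T_G,F)$, namely that non-adjacent $V_s,V_t$ in $G$ is equivalent to incomparability of $s,t$ in $T_G$.
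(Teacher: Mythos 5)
Your proof is correct and follows essentially the same route as the paper: the comparability graph handles the forward direction, and for the converse the tree $T_G$ with the fact that incomparable nodes have non-adjacent sets $V_s, V_t$ turns a $\kappa$-antichain into a $\kappa$-independent set. One small citation fix: the implication ``a $\kappa$-branch in $T_G$ yields a $K_\kappa$ minor of $G$'' comes from Proposition \ref{branch_minor} applied to the $T_G$-graph $(T_G,F)$, which is a minor of $G$ (item (1) of the list following Theorem \ref{special--chromatic}), not from Proposition \ref{t_g_properties}(1), which states the converse implication (and note that in your closing remark only the direction ``incomparable $\Rightarrow$ non-adjacent'' holds, not the stated equivalence, though that is all your argument uses).
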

\begin{proof}
Given a $\kappa$-Suslin tree the comparability graph has the desired properties. It clearly has no independent set of size $\kappa$ as this would translate to a $\kappa$ sized antichain in the tree and a $K_{\kappa}$ minor translates to a $\kappa$-branch, for details see \cite[Theorem 2]{komjath2017}.

In the other direction we proceed as before. Given a graph $G$ of size $\kappa$ with no independent set of size $\kappa$ and no $K_{\kappa}$ minor we construct the tree $T_G$, the size of the tree is $\kappa$. Clearly the fact that the graph has no $K_{\kappa}$ minor again translates to the fact that the tree has no $\kappa$-branch.

To see that $T_G$ has no antichain of size $\kappa$ we proceed by contradiction. Suppose $\set{t_i \mid i \in \kappa}$ is an antichain in $T_G$ and consider the sets $\set{V_{t_i} \mid i \in \kappa}$. We showed that if $s$ is incomparable with $t$ then there is no edge connecting the sets $V_s$ and $V_t$, thus choose any $x_i \in V_{t_i}$ as all of these are non-empty, now it is clear that $\set{x_i \mid i \in \kappa}$ forms an independent set in $G$, a contradiction.
\end{proof}

\section{Narrow and Kurepa trees}

In this section we will try to characterize Aronszajn and Kurepa trees similarly to what has been done in the previous section. First we introduce a generalized notion of connectedness for graphs.

\begin{definition}
Suppose $\kappa, \lambda$ are infinite cardinals. A graph is $(\kappa,\lambda)$-connected if after the removal of less than $\kappa$ vertices the number of components is non-zero and less than $\lambda$.
\end{definition}
\begin{remark}
Note that the proof of Proposition \ref{hc_limit} shows that if $\kappa$ is singular, then the constructed graphs are $(\kappa, \kappa)$-connected; this is not the case for regular $\kappa$.
\end{remark}

Evidently, a graph is $\kappa$-connected when it is $(\kappa, 2)$-connected. This notion aims to stratify the property of connectedness for graphs. Typical examples of $(\kappa, \lambda)$-connected graphs come from trees and this notion is closely related to their width, e.g.\ the comparability graph of an Aronszajn tree is $(\omegaone, \omegaone)$-connected.

\begin{proposition}
Suppose $\kappa$ is an infinite cardinal. If $G$ is a graph of size at least $\kappa$ and $G$ has no independent set of size $\kappa$, then $G$ is $(\kappa, \kappa)$-connected.
\end{proposition}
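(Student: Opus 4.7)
The plan is to verify both parts of $(\kappa,\kappa)$-connectedness after removing an arbitrary set $S \subseteq V$ with $|S| < \kappa$. First, nonvacuity of the component count is immediate: since $|V| \ge \kappa > |S|$, the set $V \setminus S$ is nonempty, so $G \setminus S$ has at least one component.

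For the upper bound I would argue by contradiction. Suppose $G \setminus S$ has at least $\kappa$ components, and let $(C_\alpha)_{\alpha < \kappa}$ be $\kappa$ many pairwise distinct components. For each $\alpha$ choose a vertex $x_\alpha \in C_\alpha$; the set $X := \set{x_\alpha \mid \alpha < \kappa}$ has size $\kappa$. The key observation is that $X$ is independent in $G$: if $\alpha \neq \beta$ and $\set{x_\alpha, x_\beta} \in E$, then since neither endpoint lies in $S$, this edge would survive in $G \setminus S$ and witness that $x_\alpha$ and $x_\beta$ lie in the same component, contradicting the choice of the $C_\alpha$. Thus $X$ is an independent set of size $\kappa$ in $G$, contradicting the hypothesis.

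There is essentially no obstacle here; the only subtlety worth stating explicitly is the observation that edges of $G$ between vertices outside $S$ are preserved in $G \setminus S$, which is what turns a large family of components into a large independent set. Since both directions of the $(\kappa,\kappa)$-connectivity definition hold for every such $S$, the proof is complete.
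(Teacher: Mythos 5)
Your proof is correct and follows essentially the same argument as the paper: assume at least $\kappa$ components remain after deleting fewer than $\kappa$ vertices, pick one vertex per component, and observe that these form an independent set of size $\kappa$ in $G$, contradicting the hypothesis. Your explicit remarks on the nonvacuity of the component count and on why edges between the chosen vertices would survive the deletion are harmless elaborations of the same proof.
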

\begin{proof}
Suppose $G$ is not $(\kappa, \kappa)$-connected, then there exists a set $X \subseteq V$ of size less than $\kappa$ such that if $X$ is removed from $G$, the graph splits into at least $\kappa$ many components, $\{C_\gamma \mid \gamma < \kappa\}$. From each component choose a vertex, $x_\gamma \in C_\gamma$. Now $\set{x_\gamma \mid \gamma < \kappa}$ forms an independent set in $G$.
\end{proof}

\begin{proposition} \label{t_g_narrow}
Suppose $\kappa \ge \lambda$ are regular cardinals. If $G$ is $(\kappa, \lambda)$-connected graph, then $(T_G)_{<\kappa}$ has levels of size $<\lambda$.
\end{proposition}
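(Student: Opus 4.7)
I plan to argue by transfinite induction on $\beta < \kappa$, proving $|T_\beta| < \lambda$. The base case $\beta = 0$ is immediate since $|T_0| = 1$.

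For the inductive step, set $X_\beta := \bigcup_{s \in T_{<\beta}} V_s$. The heart of the argument is the observation that the components of the induced subgraph on $V \setminus X_\beta$ are precisely $\{C_t : t \in T_\beta\}$. Granting this together with $|X_\beta| < \kappa$, the assumption $|T_\beta| \ge \lambda$ would yield at least $\lambda$ components of $G$ after deleting fewer than $\kappa$ vertices, contradicting $(\kappa, \lambda)$-connectedness. The bound $|X_\beta| < \kappa$ is pure cardinal arithmetic: each $V_s$ with $\gamma := \mathrm{ht}(s) < \beta < \kappa$ has $|V_s| < \kappa$ (size $1$ at successor and root levels, and at most $\mathrm{cf}|\gamma|$ at limit levels), while by hypothesis $|T_\gamma| < \lambda \le \kappa$ for each such $\gamma$; regularity of $\kappa$ gives $\sum_{s \in T_\gamma} |V_s| < \kappa$, and a second appeal to regularity over the $|\beta| < \kappa$ levels yields $|X_\beta| < \kappa$.

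The remaining work is verifying the component description. The $C_t$ with $t \in T_\beta$ are pairwise disjoint and their union is $V \setminus X_\beta$, by a level-by-level induction using the fact that $C_s \cap V_r = \emptyset$ for $r < s$; and each $C_t$ induces a connected subgraph by construction. The delicate step is ruling out an edge of $G$ joining distinct $C_s, C_t$ with $s, t \in T_\beta$ whose endpoints avoid $X_\beta$. If $u \in C_s \cap V_{s'}$ and $v \in C_t \cap V_{t'}$ with $\{u, v\} \in E$ and $u, v \notin X_\beta$, then a short incomparable/below case analysis forces $s' \ge s$ and $t' \ge t$; the analysis at the close of the proof of Theorem \ref{special--chromatic} then makes $s', t'$ comparable in $T$; combining $s' \ge s$, $t' \ge t$, $s' \le t'$ (WLOG) with $s, t$ sitting at the common level $\beta$ forces $s$ and $t$ to be comparable, contradicting $s \ne t$. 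I expect this last step to be the main bookkeeping obstacle; the rest is arithmetic and transfer from the existing proof.
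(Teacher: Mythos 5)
Your proposal is correct and takes essentially the same approach as the paper: induct on the level $\beta<\kappa$, bound $|X_\beta|=\bigl|\bigcup_{s\in T_{<\beta}}V_s\bigr|<\kappa$ by regularity of $\kappa$, and use $(\kappa,\lambda)$-connectedness to bound the number of level-$\beta$ nodes by the number of components of $G$ after deleting $X_\beta$. The only difference is one of detail: you explicitly verify that the sets $C_t$ for $t\in T_\beta$ sit in pairwise distinct components of $G\setminus X_\beta$ (reusing the comparability argument from the proof of Theorem \ref{special--chromatic} to rule out edges between distinct $C_s$ and $C_t$), a step the paper treats as immediate from the construction.
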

\begin{proof}
We will proceed by induction, clearly by our assumption the set of roots of $T_G$ has size $<\lambda$. Let $\alpha < \kappa$ and consider the level $(T_G)_{\alpha}$. Take the union $\bigcup_{t \in (T_G)_{<\alpha}} V_t$ (for definition of $V_t$ see Theorem \ref{special--chromatic}) and note that by construction of $T_G$ and the induction hypothesis this set has size $<\kappa$ so removing these vertices from $G$ leaves us with less than $\lambda$ many components, so at stage $\alpha$ in the construction of $T_G$ there are less than $\lambda$ many components to consider and hence less than $\lambda$ many nodes to extend $(T_G)_{<\alpha}$.
\end{proof}

\begin{corollary}
Suppose $\kappa$ is a regular cardinal. If there exists a cardinal $\lambda < \kappa$ such that $G$ is $(\kappa, \lambda)$-connected, then $K_\kappa$ is a minor of $G$.
\end{corollary}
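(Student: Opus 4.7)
The plan is to apply Proposition \ref{t_g_narrow} to bound the widths of $T_G$, exhibit a $\kappa$-branch in $T_G$ using a size argument, and conclude by Proposition \ref{branch_minor} together with the fact that the $T$-graph constructed in Theorem \ref{special--chromatic} is a minor of $G$.

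First I would observe that $(\kappa,\lambda)$-connectedness of $G$ forces $|V(G)| \ge \kappa$: otherwise removing $V(G)$ itself leaves zero components, contradicting the definition. Proposition \ref{t_g_narrow} then gives $|(T_G)_\alpha| < \lambda$ for every $\alpha < \kappa$. Since $\lambda < \kappa$ and $\kappa$ is regular, each node $t$ at a level below $\kappa$ has $|V_t| < \kappa$, and a short computation shows that if $T_G$ had height strictly less than $\kappa$ then $|V(G)| = \sum_{t}|V_t| < \kappa$ by regularity, a contradiction. Hence $T_G$ has height at least $\kappa$, and if some node sits at level $\kappa$ its chain of predecessors is already a $\kappa$-branch and we are done.

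The remaining case, where $T_G$ has height exactly $\kappa$ with no node at level $\kappa$, is the main obstacle. The idea is to select at each level $\alpha<\kappa$ a ``large'' node $r_\alpha\in(T_G)_\alpha$ with $|C_{r_\alpha}|\ge\kappa$; such a node exists by regularity of $\kappa$, since the fewer than $\lambda$ components of $G$ minus the $<\kappa$ vertices $\bigcup_{t\in(T_G)_{<\alpha}}V_t$ together contain at least $\kappa$ vertices. Arranging these selections into a compatible chain reduces to a König-style threading of the inverse system of large components: at a limit $\alpha$, the key is to verify $\bigcap_{\beta<\alpha}C_{r_\beta}\ne\emptyset$, which follows because the telescoping complement $\bigcup_{\gamma<\alpha}(C_{r_\gamma}\setminus C_{r_{\gamma+1}})$ inside $C_{r_0}$ can be arranged to have size $<\kappa$ by regularity of $\kappa$, leaving the intersection nonempty and letting the branch extend through $T_G$. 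Once a $\kappa$-branch is in hand, Proposition \ref{branch_minor} yields $K_\kappa$ as a minor of the $T$-graph of $T_G$, and by transitivity of the minor relation (the $T$-graph being itself a minor of $G$) we conclude that $K_\kappa$ is a minor of $G$.
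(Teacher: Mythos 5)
Your opening moves are fine and agree with the paper's route: $(\kappa,\lambda)$-connectedness forces $|V(G)|\ge\kappa$, Proposition \ref{t_g_narrow} bounds the levels of $(T_G)_{<\kappa}$ by $\lambda$, the counting argument shows the construction must run for $\kappa$ levels, and once a $\kappa$-branch is found one concludes via Proposition \ref{branch_minor} and transitivity of the minor relation. The divergence, and the genuine gap, is in how you produce the branch. The paper does not construct it greedily: it invokes Kurepa's theorem that a tree of regular height $\kappa$ whose levels all have size less than some fixed $\lambda<\kappa$ has a cofinal branch, applied to $(T_G)_{<\kappa}$. You instead try to thread "large" nodes $r_\alpha$ with $|C_{r_\alpha}|\ge\kappa$ through the levels, and your justification at limit stages does not hold up. The claim that the telescoping complement $\bigcup_{\gamma<\alpha}(C_{r_\gamma}\setminus C_{r_{\gamma+1}})$ "can be arranged to have size $<\kappa$" is unsupported: at a successor step the drop $C_{r_\gamma}\setminus C_{r_{\gamma+1}}$ consists of $V_{r_\gamma}$ together with \emph{all the sibling components} of $C_{r_{\gamma+1}}$ in $C_{r_\gamma}\setminus V_{r_\gamma}$; the hypothesis only bounds their \emph{number} by $\lambda$, not their size, and each of them may itself have size $\kappa$. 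Since $|V(G)|$ may be exactly $\kappa$, already finitely or countably many such drops can exhaust everything, making $\bigcap_{\beta<\alpha}C_{r_\beta}$ empty, i.e.\ the chosen chain simply dies at a limit level even though every node on it was "large". (There is also a smaller omission: at limit ordinals $\gamma<\alpha$ the set $C_{r_\gamma}$ is only \emph{a component} of $\bigcap_{\beta<\gamma}C_{r_\beta}$, so the telescoping identity misses the limit drops as well.)

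The deeper point is that choosing, level by level, a node whose cone is large is exactly the naive strategy that fails for branch theorems in general: local largeness of each $C_{r_\beta}$ gives no control over which component survives cofinally, and the fact that \emph{some} choice pattern threads through all limits when the levels are uniformly bounded by $\lambda<\kappa$ is precisely the nontrivial combinatorial content of Kurepa's narrow-tree theorem. So as written your argument has a gap that is essentially equivalent to the statement being used; to repair it you should either cite that theorem, as the paper does, or give an actual proof of it rather than a regularity/greedy argument.
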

\begin{proof}
Given $G$ with these properties consider the tree $(T_G)_{<\kappa}$. The size of the levels of this tree is less than $\lambda$ but the size of the entire tree is $\kappa$. By a result of Kurepa \cite{kurepa1935} each tree of height $\kappa$ whose levels have size less than $\lambda$ has a cofinal branch, hence $T_G$ has a branch of size at least $\kappa$ so by Proposition \ref{branch_minor} $G$ has a $K_\kappa$ minor.
\end{proof}

Considering the basic properties of $T_G$ together with the last proposition we have the following.

\begin{theorem}
Suppose $\kappa$ is a regular cardinal. The existence of a $\kappa$-Aronszajn tree is equivalent to the existence of a $(\kappa, \kappa)$-connected graph of size $\kappa$, which has no $K_{\kappa}$ minor.
\end{theorem}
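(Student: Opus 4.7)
I would prove both directions separately, with the harder content concentrated in the forward direction.

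For the reverse direction, given a $(\kappa,\kappa)$-connected graph $G$ of size $\kappa$ with no $K_\kappa$ minor, I would apply Theorem \ref{special--chromatic} to obtain the tree $T_G$. Proposition \ref{t_g_narrow} (with $\lambda=\kappa$) shows that every level of $(T_G)_{<\kappa}$ has size $<\kappa$, and Proposition \ref{branch_minor} together with the absence of a $K_\kappa$ minor rules out $\kappa$-branches in $T_G$. The only remaining point is that $T_G$ has height exactly $\kappa$. If its height were some $\gamma<\kappa$, regularity of $\kappa$ would force $|T_G|<\kappa$, and since each $V_t$ has size at most $\mathrm{cf}|\mathrm{ht}(t)|<\kappa$, the partition $V=\bigsqcup_{t\in T_G}V_t$ would yield $|V|<\kappa$, contradicting $|V|=\kappa$. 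Hence $T_G$ is a $\kappa$-Aronszajn tree.

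For the forward direction, given a $\kappa$-Aronszajn tree $T$, I would take its comparability graph $G$. The size is $\kappa$, and a $K_\kappa$ minor in $G$ would produce a $\kappa$-branch in $T$ by the argument of \cite[Theorem 2]{komjath2017}, which is impossible. The substantive content is verifying $(\kappa,\kappa)$-connectedness. For any $X\subseteq V_G$ with $|X|<\kappa$, I would identify the components of $G\setminus X$ with the set of sources $\{m\in T\setminus X\mid\pred(m)\subseteq X\}$: each component contains such a source by well-foundedness of $T$, and any two sources in the same component must coincide because a path $m=x_0,x_1,\ldots,x_n=m'$ in $G\setminus X$ forces, by a short induction using $\pred(m)\subseteq X$ and $x_i\notin X$, every $x_i$ to satisfy $x_i\ge m$ in the tree, so $m'\ge m$ and by symmetry $m=m'$. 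Since $\pred(m)\subseteq X$ gives $\mathrm{ht}(m)\le|X|$, every source lies in the union $T_{\le|X|}$ of the first $|X|+1$ levels, and this set has size $<\kappa$ by regularity of $\kappa$ and the Aronszajn hypothesis on level sizes. Non-emptiness of $G\setminus X$ is automatic from $|X|<\kappa=|V|$.

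The main obstacle is setting up the bijection between components and sources, particularly the uniqueness argument that relies on a careful induction along paths. Once that structural claim is established, the cardinal-arithmetic bound is an immediate consequence of regularity and the small-level property.
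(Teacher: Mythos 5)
Your overall route coincides with the paper's: one direction takes the comparability graph of a $\kappa$-Aronszajn tree and cites Komj\'ath's argument to exclude a $K_\kappa$ minor, the other applies the construction of Theorem \ref{special--chromatic} and quotes Propositions \ref{t_g_narrow} and \ref{branch_minor}, together with the cardinality argument forcing $T_G$ to have height exactly $\kappa$ (which you spell out in more detail than the paper, correctly). Your identification of the components of $G\setminus X$ with the ``sources'' $m$ satisfying $\pred(m)\subseteq X$, with the path induction for uniqueness, is a more explicit version of the paper's one-line count and is fine.

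There is, however, one step in the connectedness argument that fails as stated: from $\pred(m)\subseteq X$ you infer $\mathrm{ht}(m)\le|X|$. Heights are ordinals, and the hypothesis only gives $|\pred(m)|\le|X|$, i.e.\ $\mathrm{ht}(m)<|X|^+$; a source can have height, say, $|X|+|X|$, so it need not lie in the first $|X|+1$ levels. Nor does the weakened bound $\mathrm{ht}(m)<|X|^+$ save the count in the central case: if $|X|^+=\kappa$ (for instance $\kappa=\omega_1$ and $X$ countably infinite), then the nodes of height $<|X|^+$ form the whole tree, of size $\kappa$. The repair is to apply regularity of $\kappa$ to $X$ first, which is exactly the paper's move: since $|X|<\kappa$ and every node has height $<\kappa$, there is $\alpha<\kappa$ with $X\subseteq T_{<\alpha}$; then any source $m$ has $\mathrm{ht}(m)\le\alpha$, for otherwise $m$ would have a predecessor at level exactly $\alpha$, and that predecessor would lie in $\pred(m)\subseteq X\subseteq T_{<\alpha}$, a contradiction. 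Hence all sources lie among the nodes of height at most $\alpha$, a union of fewer than $\kappa$ levels each of size $<\kappa$, so by regularity there are fewer than $\kappa$ components. With this one correction your proof is complete and essentially identical in structure to the paper's.
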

\begin{proof}
Let $T$ be a $\kappa$-Aronszajn tree. We show that the comparability graph $G_T$ of this tree has the desired properties. Note that removing less than $\kappa$ many vertices from $G_T$ we can assume all of them lie below some level $\alpha$ of $T$ and hence if they are deleted we end up with less than $\kappa$ many connected subgraphs as the levels of $T$ have size less than $\kappa$. For the second property see the proof of \cite[Theorem 2]{komjath2017} using the fact that $T$ has no $\kappa$-branch. We can also prove the converse. If $G_T$ had the necessary properties then $T$ would be $\kappa$-Aronszajn to begin with. Clearly, a $\kappa$-branch in $T$ would imply the complete graph $K_\kappa$ is a subgraph of $G_T$ and if $T$ had levels of size $\kappa$ consider the first level $\alpha < \kappa$ which has $\kappa$ many nodes, then removing those vertices from $G_T$ which correspond to nodes in $T$ of height less than $\alpha$ (there are less than $\kappa$ many of those) we would end up with $\kappa$ many disjoint connected components of $G_T$ which is a contradiction as we assumed it is $(\kappa, \kappa)$-connected.

If $G$ is $(\kappa, \kappa)$-connected of size $\kappa$ with no $K_{\kappa}$ minor, then by the previous proposition $T_G$ has levels of size less than $\kappa$ and by the properties of the construction of $T_G$, it can have no $\kappa$-branch as this would imply $K_\kappa$ is a minor of $G$; from this we also have that $T_G$ has height $\kappa$ as $G$ must have size $\kappa$ as it is $(\kappa, \kappa)$-connected, hence $T_G$ has size $\kappa$. We can prove the converse here as well, $T_G$ being $\kappa$-Aronszajn implies that $G$ is $(\kappa, \kappa)$-connected and has no $K_\kappa$ minor. By Proposition \ref{t_g_properties} we get that since $T_G$ has no cofinal branch, then $G$ cannot have a $K_\kappa$ minor, also if less than $\kappa$ many vertices are removed from $G$, then there is a level $\alpha < \kappa$ such that all of these vertices are contained in the sets $V_t$ for $t \in (T_G)_{<\alpha}$, however, as $T_G$ has levels of size less than $\kappa$ then removing all of these vertices leaves us with less than $\kappa$ many cones in $T_G$ all of which define connected subgraphs of $G$.
\end{proof}

\begin{corollary}
Suppose $\kappa$ is an inaccessible cardinal. The following are equivalent:
\begin{enumerate}
    \item $\kappa$ is weakly compact,
    \item each $(\kappa, \kappa)$-connected graph has a $K_{\kappa}$ minor.
\end{enumerate}
\end{corollary}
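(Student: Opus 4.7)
The plan is to reduce this equivalence to the standard characterization of weakly compact cardinals among inaccessibles---$\kappa$ is weakly compact iff no $\kappa$-Aronszajn tree exists---combined with the preceding theorem and the $T_G$ construction from Theorem \ref{special--chromatic}.

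For $(2)\Rightarrow(1)$ I would argue contrapositively. If $\kappa$ is inaccessible but not weakly compact, then a $\kappa$-Aronszajn tree exists, and the previous theorem converts it into a $(\kappa,\kappa)$-connected graph of size $\kappa$ with no $K_\kappa$ minor, directly violating $(2)$.

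For $(1)\Rightarrow(2)$, let $G$ be $(\kappa,\kappa)$-connected and suppose for contradiction that $G$ has no $K_\kappa$ minor. First I would note that $(\kappa,\kappa)$-connectedness forces $|G|\ge\kappa$, since otherwise removing every vertex of $G$ would be a removal of fewer than $\kappa$ vertices yielding zero components, contradicting the definition. I would then build $T_G$ via Theorem \ref{special--chromatic}. By Proposition \ref{t_g_narrow} the levels of $(T_G)_{<\kappa}$ have size $<\kappa$, and by Proposition \ref{branch_minor} together with the no-minor hypothesis, $T_G$ admits no $\kappa$-branch.

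The argument then splits on $\mathrm{ht}(T_G)$. If the height is $<\kappa$, inaccessibility (regularity plus narrow levels) forces $|T_G|<\kappa$, and because the construction yields $|V_t|<\kappa$ at every node $t$ of height below $\kappa$, inaccessibility once more gives $|G|=|\bigcup_{t\in T_G}V_t|<\kappa$, contradicting $|G|\ge\kappa$. If the height is $\ge\kappa$, then $(T_G)_{<\kappa}$ is a tree of height $\kappa$ with levels of size $<\kappa$ and no cofinal branch---precisely a $\kappa$-Aronszajn tree---contradicting weak compactness. The only real subtlety is that $G$ need not have size exactly $\kappa$, so the previous theorem is not literally applicable; invoking Propositions \ref{branch_minor} and \ref{t_g_narrow} directly and using inaccessibility to dispose of the low-height case is what closes the gap.
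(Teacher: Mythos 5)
Your proposal is correct, and it is worth noting where it goes beyond what the paper does. The paper offers no separate proof: the corollary is meant to follow immediately from the preceding theorem (a $\kappa$-Aronszajn tree exists iff there is a $(\kappa,\kappa)$-connected graph of size $\kappa$ with no $K_\kappa$ minor) together with the classical fact that an inaccessible $\kappa$ is weakly compact iff it has the tree property. Your $(2)\Rightarrow(1)$ direction is exactly that reduction. For $(1)\Rightarrow(2)$, however, you correctly observe that the corollary as stated quantifies over \emph{all} $(\kappa,\kappa)$-connected graphs, with no size restriction, so the theorem is not literally applicable when $|G|>\kappa$ (and it is not obvious that one can pass to a $(\kappa,\kappa)$-connected subgraph of size $\kappa$ preserving the absence of a $K_\kappa$ minor). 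Your direct argument via $T_G$ closes this: Proposition \ref{t_g_narrow} gives levels of size $<\kappa$ below $\kappa$, Proposition \ref{branch_minor} applied to the $T_G$-graph (a minor of $G$) excludes chains of length $\kappa$, and the case split on $\mathrm{ht}(T_G)$ works --- if the height is $<\kappa$, regularity of $\kappa$ together with $|V_t|\le\max\{\mathrm{cf}|\mathrm{ht}(t)|,1\}<\kappa$ gives $|G|<\kappa$, contradicting your (correct) observation that $(\kappa,\kappa)$-connectedness forces $|G|\ge\kappa$; if the height is $\ge\kappa$, then $(T_G)_{<\kappa}$ is $\kappa$-Aronszajn, contradicting weak compactness. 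So your route is a mild but genuine strengthening of the paper's implicit argument: it proves the corollary in the unrestricted form in which it is stated, at the cost of re-running the $T_G$ analysis rather than quoting the theorem verbatim.
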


For a more succinct characterization of Kurepa trees we define a so called \emph{Kurepa minor family} in a graph.

\begin{definition}
Suppose $\kappa$ and $\lambda$ are infinite cardinals. Let $G$ be a graph and $\set{W_\alpha \mid \alpha < \lambda}$ a collection of $K_\kappa$ minors of $G$. We say that $\set{W_\alpha \mid \alpha < \lambda}$ forms a \emph{$\kappa$-Kurepa minor family} of size $\lambda$ if for each $\alpha$ and $\beta$ a set of size less than $\kappa$ separates $W_\alpha$ from $W_\beta$ in $G$.
\end{definition}

\begin{theorem}
Suppose $\kappa$ is a regular cardinal. The existence of a $\kappa$-Kurepa tree is equivalent to the existence of a $(\kappa, \kappa)$-connected graph of size $\kappa$, which has a $\kappa$-Kurepa minor family of size at least $\kappa^+$.
\end{theorem}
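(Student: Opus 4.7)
The plan is to parallel the Aronszajn characterisation: use the comparability graph $G_T$ for the forward direction and the tree $T_G$ for the reverse, putting the $\kappa$-branches of $T_G$ into bijection with a Kurepa minor family.

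For the forward direction, let $T$ be a $\kappa$-Kurepa tree with a family $\{b_\alpha \mid \alpha<\kappa^+\}$ of distinct $\kappa$-branches, and put $G:=G_T$. From the proof of the Aronszajn theorem above, $|G|=\kappa$ and $G$ is $(\kappa,\kappa)$-connected. Define $W_\alpha:=b_\alpha$; each is a $\kappa$-clique of $G_T$, hence a $K_\kappa$ minor. For distinct $\alpha,\beta$, let $\delta<\kappa$ be the least level at which $b_\alpha$ and $b_\beta$ carry different nodes and set $S:=T_{\le\delta}$, which has size $<\kappa$ by regularity of $\kappa$. The components of $G_T\setminus S$ are precisely the subtrees of $T$ rooted at the nodes of $T_{\delta+1}$, and $b_\alpha\setminus S$, $b_\beta\setminus S$ sit in the two distinct subtrees rooted at the level-$(\delta+1)$ nodes of $b_\alpha$ and $b_\beta$, so $S$ separates $W_\alpha$ from $W_\beta$.

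For the backward direction, assume $G$ is $(\kappa,\kappa)$-connected of size $\kappa$ with a $\kappa$-Kurepa minor family $\{W_\alpha\mid\alpha<\kappa^+\}$, and construct $T_G$ as in Theorem \ref{special--chromatic}. By Proposition \ref{t_g_narrow}, $T_G$ has levels of size $<\kappa$, and by Proposition \ref{t_g_properties}(1) each $W_\alpha$ gives rise to a branch $b_\alpha$ of length $\kappa$ in $T_G$ via a chosen $K_\kappa$ sub-minor $\{U_\gamma^\alpha\mid\gamma<\kappa\}$ of $W_\alpha$ with minima $t_\gamma^\alpha=\min\{t\in T_G\mid U_\gamma^\alpha\cap V_t\neq\emptyset\}$; in particular $T_G$ has height $\kappa$ and size $\kappa$. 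It remains to show that the $b_\alpha$ can be chosen pairwise distinct, and this will follow from the central claim: if minors $W_\alpha,W_\beta$ give rise to a common branch $b$, then no set of fewer than $\kappa$ vertices separates them in $G$.

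To prove the claim, fix $S\subseteq V(G)$ with $|S|<\kappa$. Because $\{V_t\mid t\in b\}$ is a pairwise disjoint family with each $|V_t|<\kappa$, the set $\{t\in b\mid V_t\cap S\neq\emptyset\}$ has size $<\kappa$ and is therefore bounded in $b$ by some $\delta<\kappa$. Let $B^+:=\bigcup\{V_t\mid t\in b,\ \mathrm{ht}(t)\ge\delta\}$; then $B^+\cap S=\emptyset$, each $V_t\subseteq B^+$ is connected in $G$, and the $T$-graph property along $b$ (successors join to their predecessor, limits to cofinally many predecessors) makes $B^+$ connected in $G\setminus S$. The chain $\{t_\gamma^\alpha\}$ is cofinal in $b$, so cofinally many indices satisfy $\mathrm{ht}(t_\gamma^\alpha)\ge\delta$; as the sets $U_\gamma^\alpha$ are pairwise disjoint, the corresponding representatives $v_\gamma\in U_\gamma^\alpha\cap V_{t_\gamma^\alpha}$ give $\kappa$ pairwise distinct vertices, of which only $<\kappa$ can lie in $S$, producing some $v\in W_\alpha\cap B^+\setminus S$. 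The symmetric construction for $\beta$ yields $v'\in W_\beta\cap B^+\setminus S$, and a path inside $B^+$ from $v$ to $v'$ contradicts the separation. The main subtlety is that the map $\gamma\mapsto t_\gamma^\alpha$ need not be injective, so one has to combine cofinality of its image in $b$ with the disjointness of the sets $U_\gamma^\alpha$ to extract a witness in $B^+$ that survives the removal of $S$.
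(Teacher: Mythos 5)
Your proposal is correct and follows essentially the same route as the paper: the comparability graph with branches as $K_\kappa$ minors (separated by an initial segment of the tree) in one direction, and in the other the tree $T_G$ with Proposition \ref{t_g_narrow} for the levels and Proposition \ref{t_g_properties}(1) to extract a branch from each minor, using that a separating set of size $<\kappa$ meets only boundedly many of the sets $V_t$ along a branch, so the connected tail of a common branch would contain vertices of both minors and defeat the separation. Your write-up is merely the contrapositive phrasing of the paper's (much terser) argument, with the details filled in.
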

\begin{proof}
If $T$ is a $\kappa$-Kurepa tree then the comparability graph of $T$ clearly has all the necessary properties.

On the other hand given such a graph the tree $T_G$ is a $\kappa$-Kurepa tree. From the construction we obtain that the height of $T_G$ is at most $\kappa$. The size of the levels is less than $\kappa$ by Proposition \ref{t_g_narrow}.

As for the branches, we get from Proposition \ref{t_g_properties} that each $K_\kappa$ minor defines a $\kappa$-branch in $T_G$. It is enough to observe that since $G$ has a $\kappa$-Kurepa minor family of size $\kappa^+$ we get that each pair of branches coming from this family is eventually different. Since each pair of minors is separated by a set, $X$, of size less than $\kappa$ we obtain that there is some $\alpha < \kappa$ such that the set of nodes $t$ with the property that $V_t$ intersects $X$ lie in $(T_G)_{<\alpha}$, this implies that the branches defined from the minors are indeed different.
\end{proof}

\section{The $\kappa$-Hadwiger conjecture} \label{sec_generalization}

Theorem \ref{special--chromatic} was used to prove that $\hc = \st$ but it also shows us where to look for models of the Hadwiger conjecture on higher cardinals.

\begin{definition}
The $\kappa$-Hadwiger conjecture states that every graph of size $\kappa$ whose chromatic number is $\kappa$ has a $K_\kappa$ minor.
\end{definition}

The case when $\kappa=\omegaone$ consistently holds, see Theorem \ref{hc_ma}, and by our result is equivalent to $\st > \omegaone$. For $\kappa$ limit the conjecture always fails as shown in Proposition \ref{hc_limit}. The generalized continuum hypothesis implies that the $\kappa$-Hadwiger conjecture fails for every infinite $\kappa$, this follows from Theorem \ref{hc_tree}.

Using the technique of Laver and Shelah (see the closing remarks in \cite{lavershelah1973}) we get a model where each tree with no $\kappa^+$-branch of size $\kappa^+$ is $\kappa$-special, except possibly at successors of singular cardinals. By Theorem \ref{special--chromatic} this also models the $\kappa^+$-Hadwiger conjecture.

\begin{theorem}
Suppose $\kappa$ is a regular cardinal. It is consistent that the $\kappa^+$-Hadwiger conjecture holds.
\end{theorem}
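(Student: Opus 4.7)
The plan is a direct application of the Laver--Shelah forcing technique \cite{lavershelah1973} combined with Theorem~\ref{special--chromatic}. The goal is to produce a generic extension in which every tree of cardinality $\kappa^+$ without a $\kappa^+$-branch is $\kappa$-special; once this is arranged, the conclusion is automatic, since any graph of size $\kappa^+$ with chromatic number $\kappa^+$ and no $K_{\kappa^+}$ minor would, by Theorem~\ref{special--chromatic} applied with $\lambda = \kappa^+$, yield a non-$\kappa$-special tree of size $\kappa^+$ with no $\kappa^+$-branch in the same model, a contradiction.

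First I would start from a ground model of $\mathrm{GCH}$ containing a weakly compact cardinal $\mu > \kappa^+$. I then perform an iteration of length $\mu$ that begins with a L\'evy collapse making $\mu$ become $\kappa^{++}$ and subsequently, at each stage $\alpha < \mu$, applies the natural $\kappa$-specialization forcing to a tree of size $\kappa^+$ without a $\kappa^+$-branch chosen by a suitable bookkeeping function. Each specialization forcing consists of partial functions with domains of size $<\kappa$ mapping into $\kappa$ and injective on chains, ordered by reverse extension; this is the direct higher analogue of the classical Baumgartner--Malitz--Reinhardt forcing.

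The main obstacle is the preservation argument: one must verify that the iteration is $\kappa^{++}$-cc (so that cardinals are preserved and bookkeeping suffices), that no $\kappa^+$-branches are added to the trees being specialized, and that no new tree of the relevant form appears in the final model which escapes the enumeration. For regular $\kappa$, weak compactness of $\mu$ provides the $\Delta$-system-style reflection that makes each specialization forcing $\kappa^{++}$-cc in the intermediate extensions and allows the iteration to carry this chain condition upward. This is precisely the content of the closing remarks of \cite{lavershelah1973}, and also the reason for the caveat there about successors of singular cardinals, which does not concern us since $\kappa$ is regular. Once these preservation facts are established, Theorem~\ref{special--chromatic} delivers the conclusion as outlined above.
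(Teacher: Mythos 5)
Your proposal is correct and follows essentially the same route as the paper: invoke the Laver--Shelah technique (the closing remarks of \cite{lavershelah1973}) to obtain, for regular $\kappa$, a model in which every tree of size $\kappa^+$ with no $\kappa^+$-branch is $\kappa$-special, and then apply Theorem~\ref{special--chromatic} with $\lambda=\kappa^+$ to rule out any counterexample to the $\kappa^+$-Hadwiger conjecture. The paper simply cites the Laver--Shelah result rather than re-sketching the iteration, but the argument is the same.
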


\section{Closing remarks}
As far as the author is concerned the construction of Brochet and Diestel \cite[Theorem 4.2.]{brochetdiestel1994} is not very well known in the set theory community and the question is how far can we push graph properties onto trees and vice versa. We have seen that chromaticity, having a large minor, independent sets and notions of connectedness all translate to properties of the tree. On the other hand the simple construction of a comparability graph from a tree reveals some graph properties arising from trees.

\end{document}